\newcommand{\nat}       {{\rm I\!N}}
\newcommand{\der}       {\vartriangleright}
\newcommand{\nder}       {\blacktriangleright}
\newcommand{\set}        {\textsc{Set}}
\newcommand{\for}        {\textsc{Fmla}}
\newcommand{\setset}        {{\set\times\set}}
\newcommand{\setfor}        {{\set\times\for}}
\newcommand{\mini}        {\mathsf{min}}
\newcommand{\maxi}        {\mathsf{max}}
\newcommand{\val}        {\mathsf{Val}}
\newcommand{\ty}        {\mathsf{Th}}
\newcommand{\CR}        {\mathsf{CRL}}
\newcommand{\co}        {\mathcal{C}}
\newcommand{\condCM}[1]        {\textsf{CM}{#1}}
\newcommand{\condCRT}[1]        {\textsf{CR$_\TCon$}{#1}}
\newcommand{\condCRS}[1]        {\textsf{CR$_\SCon$}{#1}}
\newcommand{\condCOT}[1]        {\textsf{CO$_\TCon$}{#1}}
\newcommand{\condCOS}[1]        {\textsf{CO$_\SCon$}{#1}}
\newcommand{\one}        {\mathbb{1}}
\newcommand{\zero}        {\mathbb{0}}
\newcommand{\lang}        {\mathbb{L}}
\newcommand{\disjS}[2]		{#1\cap#2=\varnothing}
\newcommand{\ndisjS}[2]		{#1\cap#2\neq\varnothing}
\newcommand{\SCon}        {\mathsf{S}}
\newcommand{\TCon}        {\mathsf{T}}
\newcommand{\derSCon}        {\der^\SCon}
\newcommand{\derTCon}        {\der^\TCon}
\newcommand{\biv}        {\nu}
\newcommand{\Sem}        {\mathbf{V}}
\newcommand{\ComR}        {\textsf{CML}}
\newcommand{\CSem}        {\textsf{SML}}
\newcommand{\CoL}        {\textsf{COL}}
\newcommand{\fini}        {\textsf{F}}
\newcommand{\Val}[1]        {\Sem_{#1}}
\newcommand{\Nder}[1]        {\nder_{#1}}
\newcommand{\Der}[1]        {\der_{#1}}
\newcommand{\bbtop}	{\top\!\!\!\top}
\newcommand{\derMini}[1]	{{\derSCon_{#1}}[\mini]}
\newcommand{\derMaxi}[1]	{{\derSCon_{#1}}[\maxi]}
\newcommand{\imp}	{\to}
\newcommand{\At}	{P}
\newcommand{\coS}	{\co2}
\newcommand{\coA}[2]	{\co_{#1}^{\mathsf{A}{:}#2}}
\newcommand{\coD}[2]	{\co_{#1}^{\mathsf{D}{:}#2}}
\newtheorem{thm}{Theorem}[section]
\newtheorem{lem}[thm]{Lemma}
\newtheorem{prop}[thm]{Proposition}
\numberwithin{equation}{section}
\begin{document}

\title{
What is a logical theory?\\ On theories containing assertions and denials%
\thanks{%
The second author acknowledges that the work was done under the scope of Project UID~/ EEA~/ 50008~/ 2019 of Instituto de Telecomunica\c{c}\~oes, financed by the applicable framework (FCT~/ MEC through national funds and co-funded by FEDER-PT2020). 
The third author acknowledges partial funding by CNPq. 
The first author passed away on 26 Aug 2017.}
}


\author[1]{Carolina~Blasio}
\author[2,3]{Carlos~Caleiro}
\author[4]{Jo\~ao Marcos}
\affil[1]{Independent scholar}
\affil[2]{SQIG--Instituto de Telecomunica\c{c}\~{o}es, Portugal} 
\affil[3]{Dep. Matem\'atica--Instituto Superior T\'ecnico, Universidade de Lisboa, Portugal}
\affil[4]{LoLITA--DIMAp, UFRN, Brazil}

\date{}

\maketitle

\begin{abstract}
The standard notion of formal theory, in Logic, is in general biased exclusively towards assertion: it commonly refers only to collections of assertions that any agent who accepts the generating axioms of the theory should also be committed to accept.
In reviewing the main abstract approaches to the study of logical consequence, we point out why this notion of theory is unsatisfactory at multiple levels, and introduce a novel notion of theory that attacks the shortcomings of the received notion by allowing one to take both assertions and denials on a par.
This novel notion of theory is based on a bilateralist approach to consequence operators, which we hereby introduce, and whose main properties we investigate in the present paper.
\end{abstract}


\section*{Towards a widened notion of theory}

When thinking of a logical \textit{theory} as some collection of judgments closed under a given notion of consequence, it is often useful to adopt the proof-theoretic frame of mind and think of a collection of axioms and rules together with the theorems that may be derived from them, or else to put the glasses of the formal semanticist and look at the collection of sentences entailed from the initially given collection.  A general abstract approach underlying both proof theory and formal semantics, though, may be undertaken by way of the theory of consequence operators.  In the standard approach promoted by Alfred Tarski, a theory is nothing but a fixed point of a certain given \textit{consequence operator}: in the usual case, one talks about a collection of \textit{asserted} sentences that contains all other assertions that may be thought of as `following from' that very collection; dually, one might also talk about of a collection of \textit{denied} sentences that contains all other denials that follow from it.

If one takes for granted that a denial consists simply in the complement of an assertion, as much of the literature on Logic would seem to do (notice though that this was already put into question as early as in~\cite{Curry:FML}), the local reading of the rule of \textit{modus ponens} as constraining one to assert~$B$ whenever one commits oneself to asserting both $A$-implies-$B$ and~$A$ would be equivalent to reading that very rule in the form of \textit{modus tollens}, constraining one to denying~$A$ whenever one commits to asserting $A$-implies-$B$ while denying~$B$.  Further, if the \textit{proof} of an assertion amounts to globally recognizing its \textit{validity}, it would seem wrong to understand the \textit{counterproof} of a denial as a recognition of its invalidity; the latter, and dual, attitude would rather amount to a recognition that a denial corresponds to an \textit{unsatisfiable} judgment.  Assuming that unsatisfiability is the complement of validity would however consist in disposing of what would arguably constitute the most interesting class of logical sentences, namely that of contingencies: indeed, only the contingent sentences are really `informative' for the logician, in the sense that they always make a difference in terms of therewith associated states of affairs (viz.\ models), when added to either side of an inferential statement.

Thinking about assertions and denials both as first-class citizens in the activity of analyzing inferences, by considering theories that are presented by way of both kinds of judgments, not only allows the above mentioned unexpected asymmetries to be fixed, but it actually gives one the opportunity to construct strictly more expressive inferential statements than one would be able to construct using only one of the judgments of asserting or of denying.  Here goes, by way of an informal example, a theory that has no finite assertion-based presentation, but that does have a finite presentation in terms of a denial-based presentation.
Consider a set~$\lang$ of sentences represented by the natural numbers and an extra sentence~$H$.  Suppose that a formal semantics for the underlying logic takes~$H$ to be false only if~$n$ is true for every natural number~$n$, and makes no further assumptions.  From the inferential viewpoint, as we shall see, this corresponds to committing to the consecutions written as $\varnothing\der \{n,H\}$, for every $n\in\nat$ (the comma on the right-hand side of the consequence relation symbol~$\der$ could of course be eliminated in the presence of a disjunction in the underlying object-language).  Where $\Gamma_1,\Gamma_0\subseteq\lang$, let's say that $(\Gamma_1,\Gamma_0)$ presents a theory in which the sentences in~$\Gamma_1$ are taken as asserted and those of~$\Gamma_0$ are taken as denied.  Then, as the reader will be able to check by himself after the next couple of sections of the present paper, for every finite $\Gamma\subsetneq\nat$ the `closure' of the purely assertion-based presentation $(\Gamma,\varnothing)$ is the theory $(\Gamma,\varnothing)$ itself; on the other hand, the infinite theory $(\nat,\{H\})$ is the closure of the purely denial-based finite presentation $(\varnothing,\{H\})$.

The latter example properly fits within the enterprise of investigating abstract consequence relations in terms of a framework often called 
`$\setset$ consequence', `symmetric[al] consequence' or `generalized consequence' (cf.\ \cite{ShoesmithSmiley,Gabbay:SIiHIL,Segerberg,DunnHardegree,Zygmunt,Humberstone:Connectives,HumberstoneScott}, and others), and contrasts with the traditional framework known as 
`$\setfor$ consequence', which allows for multiple premises and focuses on a single conclusion at any given time. 
For very natural reasons, as we shall see, the investigation of logical theories containing both assertions and denials, in the present paper, will be based on the above mentioned generalized notion of consequence.

We start Section~\ref{sec:consequence} of the present paper by stating the properties of a somewhat intuitive notion of `compatibility relation' among sets of sentences, taking next the generalized notion of consequence relation (that we call `$\SCon$-consequence relation') to be its complement and the Tarskian-inspired notion of consequence relation (that we call `$\TCon$-consequence relation') to impose a structural restriction on $\SCon$-consequence.
We rephrase then the $\TCon$-consequence relations in terms of `$\TCon$-consequence operators', define the space of all theories associated to $\TCon$-consequence, and consider some of the numerous possible $\TCon$-consequence counterparts of a given $\SCon$-consequence relation.  This section also recalls the meaning of the property of `finitariness' (of which the so-called `compactness property' constitutes the semantic rendering), as applied to the above mentioned notions.  The role of the latter property will be taken into account in later sections.%

Section~\ref{sec:semantics} clarifies how compatibility relations are connected, among other things, to purely meta-theoretical formulations of the fundamental logical principles of Excluded Middle and Non-Contradiction.  In this section we recall and discuss the Galois connections that may be established between formal semantics and the notions of compatibility and consequence, and we highlight the well-known problem concerning the failure of `absoluteness', according to which there is in general no one-to-one correspondence between $\TCon$-consequence relations and the therewith associated collections of bivaluations.  Together with some other illustrations provided in this section (e.g., the example of a non-fini\-tary $\SCon$-consequence relation that induces a finitary $\TCon$-consequence relation), the latter expressive shortcoming is used to motivate our present quest for an adequate generalization of consequence operators and an adequate notion of theory that fits the bill in the study of $\SCon$-consequence relations.

In Section~\ref{sec:genConsOp} we investigate more deeply the class of $\TCon$-consequence relations and operators that could be associated to the same given $\SCon$-consequence relation.  To put it in deductive terms, the main idea here is to consider theories presented in terms of both axioms and anti-axioms, and to allow one to ask for their associated theorems and anti-theorems: namely, for a given $\SCon$-consequence relation~$\der$, for each fixed set~$\Gamma_0$ of background denied assumptions 
we show how to define a $\TCon$-consequence operator $\coD{\der}{\Gamma_0}$, and for each fixed set~$\Gamma_1$ of background asserted assumptions 
we show how to define a $\TCon$-consequence operator $\coA{\der}{\Gamma_1}$.  The latter consequence relations are then used to explain the behavior of our novel bilateralist notion of $\SCon$-consequence operator, according to which the $\SCon$-closure $\coS(\Gamma_1,\Gamma_0)$ of the pair $(\Gamma_1,\Gamma_0)$ is given by the `theory-pair' $(\coD{\der}{\Gamma_0}(\Gamma_1),\coA{\der}{\Gamma_1}(\Gamma_0))$.  One should observe, in particular, how each component of the latter notion has its own associated context of judgment: one is to talk accordingly about the theorems that are concluded `modulo a set of denials', and about the anti-theorems that are concluded `modulo a set of assertions'.

To the best of our knowledge, all other extant notions of theory-pair proposed in the literature are based on partitions of the whole set of sentences into those that are asserted and those that are denied.  This corresponds in fact to (consistent and) complete, or maximal theories, and an appropriate adequate semantics may be associated to the latter, for any given logic.  However, as we argue in this paper, there is no realistic reason why theories should be so `fully informative': in general theories need not divide all sentences into those belonging to the class of theorems and those belonging to the class of anti-theorems.
At last, in Section~\ref{sec:theories}, we proceed to show how the above mentioned notion of $\SCon$-closure may accordingly be used to provide a novel notion of theory (as a fixed point of~$\coS$) that takes both assertions and denials on an equal footing, and that allows us to investigate the associated spaces of theories.  
We finish the paper by generalizing a well-known result that identifies the property of the space of theories that corresponds to finitariness of consequence.

\paragraph{Historical digression.}
The terminology introduced by Tarski in~\cite{Tarski:fund30} 
---and used in fact by most of the subsequent Polish literature on the study of logical calculi--- 
for the fixed points of a given closure operator~$C$ was that of
\emph{deductive system} or, frequently and more specifically,
(\emph{closed}) \emph{$C$-system}.  Much later on
(cf.~\cite{Wojcicki:TLC}), \emph{logical theories} were to be
identified with collections of said deductive systems that happened to be closed
under substitutions (cf.~\cite{LosSuszko58}).  Tarski's initial aim
was that of studying in abstract the fundamental properties and
underlying concepts of Hilbert's axiomatic method in metamathematics
(to which Tarski referred as `the methodology of the deductive
sciences').
Such abstract ---and axiomatic--- approach to consequence aimed thus
at generalizing the concrete approach to the (proof-theoretical)
formalization of mathematical theories based on collections of
inference rules of a certain format, the closure under which would
suffice to turn a given collection of assertions into the theory
thereby induced.  Some time later, however, Tarski claimed that there was
a mismatch between the proof-theoretical approach and the
`common concept of consequence' (cf.~\cite{Tarski:36}), namely, the
one that was to be captured by `scientific semantics' ---a programme
inspired by Carnap's early model theory.   Accordingly, Tarski
explicitly formulated then the \emph{truth-preserving} concept of
consequence which is the one nowadays canonically associated with
\mbox{$\TCon$-consequence relations}.  It is worth noting that no connection 
was established at the time between sets of (object-level) axioms and classes of
models; general results concerning the adequacy of the semantical
notion of consequence to the abstract notion of consequence were
unheard of in this early period.

We also note, as an aside, that the now standard notion of `$\TCon$-consequence' is \textit{not} the one that was originally entertained in~\cite{Tarski:fund30}, for Tarski always took finitariness for granted, as an integral part of his axiomatization of consequence operators.  Non-finitary consequence operators, indeed, were to become the standard only much later (cf.~\cite{LosSuszko58}).  A similar misattribution that persists in the literature concerns the notion of `$\SCon$-consequence', which was \textit{not} entertained by Scott in~\cite{Scott:MVL74-Tarski}, where a structural restriction on the cardinality of the consecutions was imposed to the effect that, once more, only finitary logics happened to be considered. The study of (non-finitary) generalized consequence relations, in the sense invoked here in talking about `$\SCon$-consequence', properly started with~\cite{ShoesmithSmiley}.





\section{Capturing the notion of logical consequence}
\label{sec:consequence}


Let $\lang$ be a non-empty set of \emph{sentences}.  
We will assume that the judgments of \emph{assertion} and \emph{denial} are primitive in our metalanguage, and in what follows 
we will intuitively think of the \emph{consecution} $(\Delta_1,\Delta_0)$ as a meta-logical expression concerning the `compatibility' of certain judgments, 
namely, the assertion of all sentences in~$\Delta_1\subseteq\lang$ and the denial of all sentences in~$\Delta_0\subseteq\lang$. 
%
Building on that idea, a (\emph{canonical logical}) \emph{
compatibility relation} (\emph{on $\lang$}) will be here defined as any relation~${\nder}$ on $\wp(\lang)\times\wp(\lang)$ satisfying, 
for every $\Pi,\Pi^\prime,\Sigma,\Sigma^\prime,\Delta\subseteq \lang$:
%
\begin{enumerate}[labelindent=\parindent, leftmargin=*,label=($\condCM{\arabic*}$)]\setcounter{enumi}{-1}
	\item
	\label{comp:E}
	if $\Pi^\prime\cup\Pi\nder\Sigma\cup\Sigma^\prime$, then $\Pi\nder\Sigma$ 
	\item 
	\label{comp:U}
	if $\Pi\nder\Sigma$, then $\disjS{\Pi}{\Sigma}$
	\item 
	\label{comp:A}
	if $\Pi\nder\Sigma$, then 
	there is some $\Delta^\prime\subseteq\Delta$ such that $\Delta^\prime\cup\Pi\nder\Sigma\cup
	(\Delta\setminus\Delta^\prime)$
\end{enumerate}
The reading of \ref{comp:E} is immediate: in any state of affairs in which a certain set of sentences~$\Delta_1=\Pi^\prime\cup\Pi$ is asserted while a certain set of sentences~$\Delta_0=\Sigma\cup\Sigma^\prime$ is denied, one may in particular say that all subsets of~$\Delta_1$ are asserted and that all subsets of~$\Delta_0$ are denied.  Furthermore, on the one hand, taking $\Pi=\Sigma=\{A\}$, property \ref{comp:U} says that the sentence~$A$ may not be simultaneously asserted and denied; on the other hand, taking $\Delta=\{A\}$, property \ref{comp:A} says that the sentence~$A$ must be either asserted or denied (in a context where the sentences in~$\Pi$ are asserted and those in~$\Sigma$ are denied).  One might say thus that \ref{comp:U} provides a meta-logical formulation of the `Principle of Non-Contradiction', and disallows for \textit{glutty} states of affairs in which a sentence is simultaneously asserted and denied: In any given (consistent) state of affairs, asserting a given sentence~$A$ should not be compatible with denying it. 
Dually, one might say that \ref{comp:A} provides a meta-logical formulation of the `Principle of Excluded Middle', and disallows for \textit{gappy} states of affairs in which a sentence is neither asserted nor denied: In no state of affairs can a sentence~$A$ fail to be either asserted or denied.

The complement~$\der$ of a 
compatibility relation~$\nder$ on~$\wp(\lang)\times\wp(\lang)$ will here be called an \emph{$\SCon$-consequence relation} (\emph{on~$\lang$}).  It should be clear that it satisfies the following properties, 
for every $\Pi,\Pi^\prime,\Sigma,\Sigma^\prime,\Delta\subseteq \lang$:
\begin{enumerate}[labelindent=\parindent, leftmargin=*,label=($\condCRS{\arabic*}$)]\setcounter{enumi}{-1}
	\item
	\label{SCon:E}
	if $\Pi\der\Sigma$, then $\Pi^\prime\cup\Pi\der\Sigma\cup\Sigma^\prime$ 
	\item 
	\label{SCon:U}
	if $\ndisjS{\Pi}{\Sigma}$, then $\Pi\der\Sigma$
	\item 
	\label{SCon:A}
	if $\Delta^\prime\cup\Pi\der\Sigma\cup(\Delta\setminus\Delta^\prime)$ for every $\Delta^\prime\subseteq\Delta$, then 
	$\Pi\der\Sigma$
\end{enumerate}
Examples of $\SCon$-consequence relations abound in the literature, but many logicians seem still not to be so familiar with them ---or even resist to them without justifications based on anything but misunderstandings about what they mean and how they behave at the meta-logical level.

If one constrains an $\SCon$-consequence relation into a relation on $\wp(\lang)\times\lang$, one will be said to define a \emph{$\TCon$-consequence relation} (\emph{on~$\lang$}). Note that a $\TCon$-consequence relation~$\der$ satisfies the following properties, for every $\Pi,\Pi^\prime,\Delta\subseteq\lang$ and every $A\in\lang$: 
\begin{enumerate}[labelindent=\parindent, leftmargin=*,label=($\condCRT{\arabic*}$)]\setcounter{enumi}{-1}
	\item
	\label{TCon:E}
	if $\Pi\der A$, then $\Pi^\prime\cup\Pi\der A$ 
	\item 
	\label{TCon:U}
	if $A\in{\Pi}$, then $\Pi\der A$
	\item 
	\label{TCon:A}
	if $\Delta\cup\Pi\der A$ and $\Pi\der\delta$ for every $\delta\in\Delta$, then 
	$\Pi\der A$
\end{enumerate}
%
In what follows, whenever we need to disambiguate between the symbols for $\SCon$-con\-se\-quence and for $\TCon$-con\-sequence, we shall write, respectively, $\derSCon$ and $\derTCon$.
For a se\-man\-ti\-cally-inspired analogy, one might think of the opposition between the notions of `compatibility' and `consequence' as reflecting the (dual) opposition between the notions of `satisfiability' and `validity'.

An alternative way of describing $\TCon$-consequence relations may be procured by way of a generalization of Kuratowski's axioms on topological closure, namely by omitting the axioms according to which $\co(\varnothing)=\varnothing$ and $\co(\Delta\cup\Delta^\prime)=\co(\Delta)\cup\co(\Delta^\prime)$. A \emph{consequence operator} (\emph{on~$\lang$}) is a closure operator on the partially ordered structure~$\langle\wp(\lang),\subseteq\rangle$, that is, a mapping $\co:\wp(\lang)\longrightarrow\wp(\lang)$ that satisfies, for every $\Gamma,\Delta\subseteq\lang$:
\begin{enumerate}[labelindent=\parindent, leftmargin=*,label=($\condCOT{\arabic*}$)]\setcounter{enumi}{-1}
	\item 
	\label{COpS:E}
	$\co(\Gamma)\subseteq\co(\Gamma\cup\Delta)$	

	\item 
	\label{COpS:U}
	$\Gamma\subseteq\co(\Gamma)$

	\item 
	\label{COpS:A}
	$\co(\co(\Gamma))\subseteq\co(\Gamma)$
\end{enumerate}	
Indeed, a $\TCon$-consequence relation~$\der$ on~$\lang$ induces a consequence operator~$\co_\der$ on~$\lang$ by simply setting $\co_\der(\Gamma)=\{A\in \lang:\Gamma\der A\}$, and a consequence operator~$\co$ on~$\lang$ induces a $\TCon$-consequence relation~$\der_\co$ on~$\lang$ by setting $\Pi\der_\co A$ iff $A\in\co(\Pi)$. 
Choosing to work with $\TCon$-consequence relations or with consequence operators is often just a matter of convenience, 
given that ${\der_{\co_\der}}{=}\der$ and $\co_{\der_\co}=\co$.
It is well known that the set of all $\TCon$-consequence operators on~$\lang$ equipped with the inclusion ordering constitutes a complete lattice, to which we shall refer as $\CoL_\TCon(\lang)$.

Given a consequence operator~$\co$ on~$\lang$, it is clear from \ref{COpS:U} and \ref{COpS:A} that any set of sentences of the form $\co(\Gamma)$ is a fixpoint for~$\co$.  Consider in what follows a $\TCon$-consequence relation~$\derTCon$ on~$\lang$ and a set $\Gamma\subseteq\lang$ of \emph{axioms}.  We call $\co_{\derTCon}(\Gamma)$ the \emph{$\derTCon$-theory axiomatized by~$\Gamma$}.  The elements of $\co_{\derTCon}(\Gamma)$ are dubbed its \emph{theorems}. 
We may think of a $\derTCon$-theory as the collection of assertions that any agent who accepts the generating axioms of the theory should also be committed to accept. 
A set of sentences~$\Gamma$ is called \emph{$\derTCon$-consistent} if $\co_{\derTCon}(\Gamma)\neq\lang$.  
We will use $\ty({\derTCon})=\{\co_{\derTCon}(\Gamma):\Gamma\subseteq \lang\}$ to refer to the \emph{space of all $\derTCon$-theories}.  
It is well known 
that $\ty({\derTCon})$ equipped with the inclusion ordering constitutes a complete lattice.
One of our main aims in the present paper is to propose a notion of theory that is appropriate for $\SCon$-consequence relations, taking both assertions and denials on an equal footing, and study the corresponding spaces of `$\derSCon$-theories'.

An $\SCon$-consequence relation~$\derSCon$ on $\lang$ induces an \emph{assertion-based $\TCon$-consequence relation $\derTCon_{\derSCon}$ on~$\lang$} by setting $\Pi\derTCon_{\derSCon}A$ iff $\Pi\derSCon \{A\}$, that is, $\derTCon_{\derSCon}$ consists in the restriction of $\derSCon$ to a singleton on the right-hand side. 
Clearly, such $\derTCon_{\derSCon}$ induces a corresponding space of theories, as explained above.  
As it should be expected, a $\TCon$-consequence relation might have several $\SCon$-consequence `counterparts'.  This phenomenon will be discussed in more detail in the next section.  We will see there, in particular, that a minimum such counterpart always exists, but a maximum counterpart might not exist.  
It is worth exploring the symmetry of the logical compatibility relation by observing that an $\SCon$-consequence relation~$\derSCon$ on~$\lang$ also induces a \emph{denial-based $\TCon$-consequence relation ${{}_{\derSCon}}{\derTCon}$ on~$\lang$} by setting $\Sigma\;{{}_{\derSCon}}{\derTCon}\;A$ iff $\{A\}\derSCon\Sigma$.
The above mentioned assertion-based and denial-based $\TCon$-consequence relations may be taken as representing two different `aspects' of the given~$\SCon$-consequence relation that induces them; it should be noted anyhow that the latter relation contains much more information, and it is not possible in general to recover it solely from these two specific aspects.


The next property to be considered is intended to guarantee that the compatibility of certain collections of judgments may be transferred from the (finite) local level to the (unrestrained) global level.
A logical compatibility relation~$\nder$ on~$\lang$ is called \emph{finitary} if it satisfies the following property, for every $\Pi,\Sigma\subseteq\lang$:
\begin{itemize}[leftmargin=1.5cm]
	\item[($\condCM{\fini}$)]
	if $\Pi^\prime\nder \Sigma^\prime$ for every finite $\Pi^\prime\subseteq\Pi$ and $\Sigma^\prime\subseteq\Sigma$, then $\Pi\nder\Sigma$	
\end{itemize}
%
The diverse practical incarnations of this property often turn out to be equivalent to the Axiom of Choice. 
Translated into the contexts of $\SCon$-consequence relations, $\TCon$-consequence relations, and consequence operators, respectively, finitariness may be expressed by the following statements:
\begin{itemize}[leftmargin=1.5cm]
	\item[($\condCRS{\fini}$)]
	if $\Pi\der\Sigma$, then $\Pi^\prime\der \Sigma^\prime$ for some finite $\Pi^\prime\subseteq\Pi$ and $\Sigma^\prime\subseteq\Sigma$	
\end{itemize}

\begin{itemize}[leftmargin=1.5cm]
	\item[($\condCRT{\fini}$)] 
	if $\Pi\der A$, then $\Pi^\prime\der A$ for some finite $\Pi^\prime\subseteq\Pi$	
\end{itemize}

\begin{itemize}[leftmargin=1.5cm]
	\item[($\condCOT{\fini}$)] 
	$\co(\Gamma)\subseteq\bigcup\limits_{\mbox{\scriptsize finite }\Delta\,\subseteq\,\Gamma}\co(\Delta)$	
\end{itemize}
In the concrete study of logics, finitariness is a very common property, enjoyed for instance by any logic axiomatized by finitary means or characterized by finite-valued matrices.
It is straightforward to check that the $\TCon$-con\-sequence relation $\der_\co$ is finitary if and only if the consequence operator~$\co$ is finitary.  Furthermore, for finitary consequence relations the properties \ref{comp:A}, \ref{SCon:A} and \ref{TCon:A} may clearly be simplified into the corresponding formulations in which the therein mentioned set~$\Delta$ is a singleton. 

It is not hard to check that the collection of all compatibility relations on~$\lang$ equipped with the standard inclusion ordering constitutes a complete lattice, to which we shall refer as $\ComR(\lang)$.
Obviously, the collection of all $\SCon$-consequence relations on~$\lang$ also constitutes a complete lattice under inclusion, and the same may actually be said also about the collection of all $\TCon$-consequence relations on~$\lang$.
In what follows, whenever we need to disambiguate between the two lattices of consequence relations, we will refer to the former as $\CR_\SCon(\lang)$ and refer to the latter as $\CR_\TCon(\lang)$.
Notice, in addition, that the collections of finitary consequence relations on a given set of sentences also form lattices under inclusion.

\section{Interlude on (bi)valuations}
\label{sec:semantics}

One way of actualizing the intuition that compatibility relations deal indeed with assertions and denials is by way of (bi)valuation semantics.  Let a (\emph{bi})\emph{valuation on~$\lang$} be a relation~$\biv$ on $\lang\times\{0,1\}$.  
We shall use~$\one_\biv$ to refer to the \emph{asserting aspect of~$\biv$}, namely the set $\{A\in \lang:(A,1) \in \biv\}$, and use its complement~$\zero_\biv$ to refer to the \emph{denying aspect of~$\biv$}, namely the set $\{A\in \lang:(A,0) \in \biv\}$.  
A valuation~$\biv$ on~$\lang$ is said to \emph{determine} a binary relation~$R_\biv$ on $\wp(\lang)$ defined by setting $\Pi\;R_\biv\;\Sigma$ to hold iff $\Pi\subseteq\one_\biv$ and $\Sigma\subseteq\zero_\biv$.  It should be clear that such $R_\biv$ always satisfies the property \ref{comp:E}, that $R_\biv$ satisfies the property \ref{comp:U} iff $\biv$ is functional (a.k.a.\ right-unique), and that $R_\biv$ satisfies the property \ref{comp:A} iff $\biv$ is surjective (a.k.a.\ right-total).  Accordingly, we shall call \emph{canonical valuation on~$\lang$} any total function $\biv:\lang\longrightarrow\{0,1\}$, and define a \emph{canonical semantics on~$\lang$} to be a collection of canonical valuations on~$\lang$.  
In case we are dealing with a canonical valuation~$\biv$, we will accordingly write $\Nder{\biv}$ rather than $R_\biv$.
As usual, by $\{0,1\}^\lang$ we denote the set of all canonical valuations on~$\lang$.  
The collection~$\wp(\{0,1\}^\lang)$ of all canonical semantics on~$\lang$ equipped with the standard inclusion ordering constitutes a complete lattice, to which we shall refer as $\CSem(\lang)$.
A canonical semantics~$\Sem\subseteq\{0,1\}^\lang$ is said to \emph{determine} a compatibility relation~$\Nder{\Sem} \; := \; \bigcup_{\biv\,\in\,\Sem}\Nder{\biv}$ on~$\lang$.  Obviously, the complement of such compatibility relations constitute $\SCon$-consequence relations, and so we may also say that any canonical valuation or semantics determines the associated $\SCon$-consequence and $\TCon$-consequence relations~$\derSCon_{\Sem}$ and~$\derTCon_{\Sem}$. 
It is easy to check that $\derTCon_{\derSCon_\Sem}\;=\;\derTCon_\Sem$\,, for any canonical semantics~$\Sem$, that is, the $\TCon$-consequence relation determined by~$\Sem$ coincides with the $\TCon$-consequence relation induced by the $\SCon$-consequence relation determined by~$\Sem$. 
%
In addition, given a compatibility relation~$\nder$ on~$\wp(\lang)$ and a canonical valuation~$\biv$ on~$\lang$, we shall say that~$\biv$ \emph{respects~$\nder$} if $\nder_\biv\;\subseteq\;\nder$; given a consequence relation~$\der$ on~$\wp(\lang)$ and a canonical valuation~$\biv$ on~$\lang$, we shall say that~$\biv$ \emph{respects~$\der$} if $\der\;\subseteq\;\der_\biv$.
We shall denote by $\Val{R}$ the `respectful' semantics defined by the collection of all canonical valuations on~$\lang$ that respect~$R$, where~$R$ denotes either a compatibility relation or a consequence relation.

It is worth noting that the above mappings that associate to each canonical semantics~$\Sem$ a compatibility relation~$\Nder{\Sem}$ determined by it and that associate to each compability relation~$\nder$ a respectful canonical semantics~$\Val{\nder}$
define a \emph{monotone Galois connection} between the lattice $\ComR(\lang)$ of all compatibility relations on~$\lang$ and the lattice $\CSem(\lang)$ of all canonical semantics on~$\lang$, that is, $\Nder{\Sem}\;\subseteq\;\nder$ iff $\Sem\subseteq\Val{\nder}$, for every $\nder\;\in\ComR(\lang)$ and every $\Sem\in\CSem(\lang)$.  Analogously, an \emph{antitone Galois connection} is defined between $\CR(\lang)$ and $\CSem(\lang)$, that is, $\der\;\subseteq\;\Der{\Sem}$ iff $\Sem\subseteq\Val{\der}$, for every $\der\;\in\CR(\lang)$ and every $\Sem\in\CSem(\lang)$.
From this it immediately follows that: (G1)~$\Sem\subseteq\Val{R_\Sem}$, for $R\in\{\nder,\der\}$; (G2) $\Nder{\Val{\nder}}\;\subseteq\;\nder$; (G3) $\der\;\subseteq\;\Der{\Val{\der}}$. 
Furthermore, given that any consequence relation is determined by the collection of all canonical valuations that respect it ---in other words, given that the latter collection of canonical valuations constitutes a \emph{complete} semantics for the corresponding consequence relation---, the converse of (G3) also holds good;  the converse of (G2) is seen to hold good, of course, as a corollary to that.

The property corresponding to the converse of (G1) is called \emph{absoluteness}, and it constitutes the analogue, for a logical theory, of the model-theoretic notion of `categoricity'. When it holds good, it guarantees that every compatibility / consequence relation is associated to a unique canonical semantics. 
Consider the various particular formulations for absoluteness, namely: (G1$^\nder$) $\Sem\supseteq\Val{\nder_\Sem}$; (G1$^\SCon_\der$) $\Sem\supseteq\Val{\derSCon_\Sem}$; and (G1$^\TCon_\der$) $\Sem\supseteq\Val{\derTCon_\Sem}$. 
On the one hand, it is easy to see that (G1$^\nder$) always holds good.  Indeed, suppose that the canonical valuation~$\biv$ respects $\Nder{\Sem}$, 
that is, suppose that $\Nder{\biv}\;\subseteq\;\Nder{\Sem}$. Given that obviously $\one_\biv\nder_\biv\zero_\biv$, we conclude that $\one_\biv\nder_{\biv^\star}\zero_\biv$ for some $\biv^\star\in\Sem$. This means that $\biv^\star(A)=\biv(A)$ for every $A\in\lang$, and so $\biv\in\Sem$. 
%
On the other hand, there is an essential difference in behavior between $\TCon$-consequence relations and $\SCon$-consequence relations on what concerns absoluteness, namely: (G1$^\SCon_\der$) always holds good, as a corollary to (G1$^\nder$), yet (G1$^\TCon_\der$) in general fails. 
As a matter of fact, it is not hard to identify the reason for the failure of (G1$^\TCon_\der$).  
Where $\{\biv_k\}_{k\in K}$ is a family of canonical valuations on~$\lang$, define its \emph{conjunctive combination} as the canonical valuation 
$\left[\bigcap_{k\in K}\biv_k\right]$ 
such that 
$\left[\bigcap_{k\in K}\biv_k\right](A)=1$
if $\biv_k(A)=1$ for every $k\in K$, and 
$\left[\bigcap_{k\in K}\biv_k\right](A)=0$
otherwise. 
Note first that 
$\one_{\left[\bigcap_{k\in K}\biv_k\right]}=\bigcap_{k\in K}\one_{\biv_k}$.
Next, for every $\TCon$-consequence relation $\derTCon$, note that the semantics $\Val{\derTCon}$ is closed under conjunctive combinations.  Indeed, given $\{\biv_k\}_{k\in K}\subseteq\Val{\derTCon}$, and assuming that $\Pi\derTCon A$ and that $\Pi\subseteq\one_{\left[\bigcap_{k\in K}\biv_k\right]}$, 
it follows that $\Pi\subseteq\one_{\biv_k}$ and hence $A\in\one_{\biv_k}$ for every $k\in K$.  Thus, $A\in\bigcap_{k\in K}\one_{\biv_k}=\one_{\left[\bigcap_{k\in K}\biv_k\right]}$, and we conclude that ${\left[\bigcap_{k\in K}\biv_k\right]}\in\Val{\derTCon}$. 
Nonetheless, it should be clear that not every semantics~$\Sem$ is closed under conjunctive combinations\footnote{Consider for instance a set of sentences $\lang=\{p,q\}$, and a semantics containing only two canonical valuations, $\biv_p$ and $\biv_q$, such that $\biv_x(y)=1$ iff $x=y$.  (This sort of problems related to the failure of absoluteness has been discussed as early as in~\cite{Carnap:FoL}.)}, and any witness to such phenomenon will constitute an actual counterexample to (G1$^\TCon_\der$). 
As a matter of fact, all counterexamples have this exact form: If one lets $\Sem_\cap$ denote the least superset of $\Sem$ that is closed under conjunctive combinations, one may prove that $\Val{\derTCon_\Sem}=\Sem_\cap$.  To see that, notice that every $v\in\Val{\derTCon_\Sem}$ is an intersection of canonical valuations in~$\Sem$: For each $A\notin\one_v$ one may say that $\one_v\not\der_v A$ and so there exists $v_A\in V$ such that $\one_v\not\der_{v_A} A$, that is, $\one_v\subseteq\one_{v_A}$ and $A\notin\one_{v_A}$; clearly, $v=\bigcap_{A\,\notin\,\one_v}v_A$.

The absoluteness of compatibility relations and of $\SCon$-consequence relations guarantees that $\ComR(\lang)$ and $\CR_\SCon(\lang)$ inherit from $\CSem(\lang)$ the structure of a complete Boolean algebra (under the obvious inclusion ordering).  In contrast, the complete lattice $\CR_\TCon(\lang)$ fails absoluteness and in fact fails to be distributive, in general. Failing absoluteness, a $\TCon$-consequence relation may happen to be determined by two distinct semantics.  All that we can guarantee is that $\derTCon_{\Sem}{=}\derTCon_{\Sem^\prime}$ if and only if $\Sem_\cap=\Sem^\prime_\cap$.  
However, if $\Sem_\cap=\Sem^\prime_\cap$ yet $\Sem\neq\Sem^\prime$, then we still have $\derSCon_{\Sem}{\neq}\derSCon_{\Sem^\prime}$ in spite of $\derTCon_{\Sem}{=}\derTCon_{\Sem^\prime}$.
This means that an 
assertion-based %
$\TCon$-consequence relation~$\derTCon$ may boast distinct $\SCon$-consequence relations as its `counterparts', namely, there will in general exist%
\footnote{For a straightforward class of examples (cf.\ \cite{JM:ineffable}), let $\biv_{\bbtop}$ denote the the `dadaistic' valuation on~$\lang$ such that $\one_{\biv_{\bbtop}}=\lang$, consider a semantics~$\Sem$ such that $\biv_{\bbtop}\not\in\Sem$, and let $\Sem^\star:=\Sem\cup\{\biv_{\bbtop}\}$.  Then $\derTCon_{\Sem}\;=\;\derTCon_{\Sem^\star}$, and $\lang\derSCon_{\Sem}\varnothing$, while $\lang\nder_{\Sem^\star}\varnothing$.} 
relations $\derSCon_1\;\neq\;\derSCon_2$ such that $\derTCon_{\derSCon_1}\;=\;\derTCon_{\derSCon_2}$ 
(an analogous observation may be formulated, of course, concerning denial-based $\TCon$-consequence relations and their multiple possible `counterparts' in terms of $\SCon$-consequence).
That phenomenon suggests that one should associate to $\SCon$-consequence relations a space of theories that has a richer structure than the space of theories of the $\TCon$-consequence relations induced by the former relations.  
Studying the spaces of `$\derSCon$-theories' is indeed one of our main goals in the present paper.

Delving a bit deeper, we may note that a $\TCon$-consequence relation~$\derTCon$ always has a \emph{minimum} (\emph{generalized}) \emph{counterpart $\derMini{\derTCon}$} characterized by setting $(\Pi,\Sigma)\in\derMini{\derTCon}$ iff $\Pi\derTCon A$ for some $A\in\Sigma$. It is easy to see that $\derMini{\derTCon}$ corresponds to the largest possible set of canonical valuations that respect~$\derTCon$, that is, $\derMini{\derTCon}\;{=}\derSCon_{\Val{\derTCon}}$. To that effect it suffices to check that $\Pi\derSCon_{\Sem_\cap}\Sigma$ if and only if $\Pi\derTCon_\Sem A$ for some $A\in\Sigma$.
In contrast, a maximum generalized counterpart for a $\TCon$-consequence relation~$\der$ may not exist. Indeed, in general, there may not exist a minimal set of canonical valuations~$\Sem$ such that $\Sem_\cap={\val}(\derTCon)$. 
Such a minimal set of canonical valuations, and the corresponding \emph{maximum} (\emph{generalized}) \emph{counterpart $\derMaxi{\derTCon}$} for a given $\TCon$-consequence relation~$\derTCon$ may be shown to exist, in particular, whenever~$\derTCon$ happens to be finitary.  Indeed, in this case one may use the Lindenbaum-Asser lemma%
: When it exists, the minimal set of canonical valuations corresponds precisely to the so-called \emph{relatively maximal} theories of~$\derTCon$, that is, it consists in the set of all canonical valuations~$\biv$ such that $\one_\biv\in\ty(\derTCon)$ and such that $A\in\co_{\derTCon}(\Gamma)$ for some sentence $A\notin\one_\biv$ and for every~$\Gamma$ such that $\one_\biv\subsetneq \Gamma$. 
Given a canonical semantics $\Sem\subseteq \{0,1\}^\lang$, 
and using~$\co_\Sem$ to denote $\co_{\derTCon_\Sem}$, 
it is easy to see that:
\begin{equation*}
\co_{\Sem}(\Gamma)
=\bigcap\limits_{\biv\,\in\, \Sem\mbox{\scriptsize\rm\ such that }{\one_\biv\,\supseteq\, \Gamma}}\one_\biv.
\tag{$\star$} \label{eq:TspaceTh}
\end{equation*}

For a simple illustration involving $\SCon$-consequence, consider first the $\TCon$-consequence relation $\derTCon_{CPL}$ of \emph{classical propositional logic}, determined by the set of all Boolean valuations.  On the one hand, its maximum counterpart 
consists precisely in the $\SCon$-consequence relation $\derSCon_{CPL}$ determined by the set of all Boolean valuations; on the other hand, its minimum counterpart,
determined by the closure for conjunctive combinations of the set of all Boolean valuations, seems to be of much lesser interest.  
In particular, when we consider the disjunction connective, $\lor$, we see that the consecution $(\{A \lor B\},\{A,B\})$ belongs to the maximum counterpart of~$\derTCon_{CPL}$, intuitively saying thus that one cannot simultaneously deny both sentences~$A$ and~$B$ without also denying the sentence $A \lor B$, but it clearly does not belong to the minimum counterpart of~$\derTCon_{CPL}$.
On what concerns maximum counterparts, considering now in more detail the implication connective, $\imp$, let~$\At$ be a denumerable set of variables and let~$\lang$ be the least set with $P\subseteq\lang$ and such that if $A,B\in\lang$ then $A\to B\in\lang$.  The maximum counterpart 
in this language 
of the classical $\TCon$-consequence relation
is~%
the $\SCon$-consequence relation~$\vartriangleright_\Sem$
where 
$\biv(A\to B)=0$ if and only if $\biv(A)=1$ and $\biv(B)=0$, for every $A,B\in\lang$ %
 and every $\biv\in \Sem$%
; it is indeed not difficult to check that it actually consists in the least consequence relation~$\derSCon$ such that:
$$\{A,A\imp B\}\derSCon \{B\}\qquad\textrm{and}\qquad \{B\}\derSCon \{A\imp B\}\qquad\textrm{and}\qquad \varnothing\derSCon \{A,A\imp B\}.$$ 
While the first of the latter three statements, which concerns the consecution $(\{A,A\imp B\}, \{B\})$, intuitively says that one cannot simultaneously assert~$A$ and deny~$B$ while also asserting~$A\imp B$, the second statement says~that one cannot assert~$B$ while denying $A\imp B$, and the third one says that one cannot deny~$A$ while also denying~$A\imp B$.  So, according to what we wrote at the beginning of the present section, when assertion is represented by the truth-value~$1$ and denial is represented by the truth-value~$0$, the three statements above obviously describe, as expected, the well-known truth-function for classical implication.

For an interesting non-classical illustration, is worth noting that if one considers instead the consequence relation $\derTCon_{IPL}$ of \textit{intuitionistic propositional logic}, it is known that its maximum counterpart is determined by the bivaluations defined through the usual \textit{Kripke} (\textit{topological}) \textit{semantics}, while its minimum counterpart is determined by the valuations defined through the so-called \textit{Beth interpretation} (check \cite{Gabbay:SIiHIL}, ch.2 \&~3).

Our final case study in this section aims at reinforcing the importance of looking for an adequate notion of consequence operator and an adequate notion of theory to deal with $\SCon$-consequence relations; it involves a simple non-fini\-tary $\SCon$-consequence relation that induces a finitary 
assertion-based
$\TCon$-consequence relation.
Take $\lang=\nat\cup\{\exists\}$, let~$\biv_\exists$ be the canonical valuation such that $\one_{\biv_\exists}=\{\exists\}$, and consider $\derSCon_\Sem$ determined by $\Sem=\{0,1\}^\lang\setminus\{\biv_\exists\}$. It is not difficult to check that~$\derSCon_\Sem$ is the least consequence relation~$\der$ such that 
$\{\exists\}\der\nat$; 
note that such~$\vartriangleright$ is not finitary, because $\{\exists\}\not\der\Psi$ for each finite $\Psi\subseteq\nat$ (one may check that the latter consecution holds good by considering the canonical valuation $\biv_\Psi\in \Sem$ such that $\one_{\biv_\Psi}=\{\exists\}\cup(\nat\setminus\Psi)$).
However, the consequence relation $\derTCon_{\derSCon_\Sem}$ is finitary (and actually constitutes the minimal $\TCon$-consequence relation definable on~$\lang$); indeed, $\Gamma\derTCon_{\derSCon_\Sem} A$ if and only if $A\in\Gamma$, for every $\Gamma\cup\{A\}\subseteq\lang$ (and this is confirmed by noticing that $\biv_\exists\in \Sem_\cap$, namely because $\biv_\exists=\bigcap\limits_{n\,\in\,\nat}v_{\{n\}}$).
Do note, by the way, that the consecution $\{\exists\}\der\nat$ has a clear `denial-based reading', namely: 
$\biv(\exists)=0$ whenever $\biv(\nat)=\{0\}$.

A well-known result about consequence relations (cf.\ Theorem 1.3.5 of \cite{Wojcicki:TLC})
states that a $\TCon$-consequence relation~$\der$ is finitary precisely when $\ty(\der)$ is closed under ultraproducts.  We will explore this topic in more detail later, in Section~\ref{sec:theories}, on what concerns $\SCon$-consequence and the associated notion of theory-pair that we will introduce there.  One may already observe, nonetheless, that the last illustration above shows that the mentioned result does not carry over to $\SCon$-consequence relations by considering only the space $\ty(\derTCon_{\derSCon})$ of induced $\derTCon$-theories, 
given that the latter space is closed under ultraproducts, in spite of the fact that the associated $\SCon$-consequence relation~$\derSCon$, is non-finitary.  
With its bias towards assertion, we see that $\TCon$-consequence relations can only furnish thus a partial ---though sometimes convenient--- view of the more generous logical phenomenon captured by the more symmetrical notion of $\SCon$-consequence, which allows for a more balanced take on assertion and denial.

\section{Generalizing consequence operators}
\label{sec:genConsOp}

In this section we will take action concerning the shortcomings of $\TCon$-consequence pointed out in Section~\ref{sec:semantics} and investigate a definition of consequence operator that properly fits the more symmetric notion of consequence relation given by $\SCon$-consequence which arises as a natural dual to the notion of logical compatibility explored in Section~\ref{sec:consequence}.  
We want to be able thus to account for a notion of logical theory that is simultaneously based both on a set of primitively asserted sentences and on a set of primitively denied sentences. 
Recall from Section~\ref{sec:consequence} that the received notion of consequence operator, satisfying properties \ref{COpS:E}, \ref{COpS:U} and \ref{COpS:A}, was closely associated to $\TCon$-consequence: in fact, the lattices $\CR_\TCon(\lang)$ and $\CoL_\TCon(\lang)$ are dually isomorphic.
On our quest to find a notion of consequence operator naturally associated to $\SCon$-consequence, it will be convenient from now on to distinguish among two types of consequence operators, the former one that we will henceforth 
rechristen `$\TCon$-consequence operator', and a novel one that will be referred to below as~$\coS$ and will be dubbed an `$\SCon$-consequence operator'.

Consider in what follows an $\SCon$-consequence relation~$\derSCon$ on~$\lang$ and let $\Gamma_1,\Gamma_0\subseteq\lang$ be distinguished sets of sentences of~$\lang$, respectively taken as \emph{axioms} and \emph{anti-axioms} of a certain logical theory.
Fixed in the background any set~$\Sigma$ of sentences taken by assumption to be denied, let $\coD{\derSCon}{\Sigma}(\Gamma_1)$ denote the set 
containing every sentence~$A\in\lang$ such that $\Gamma_1\derSCon\Sigma\cup\{A\}$;
fixed in the background any set~$\Pi$ of sentences taken by assumption to be asserted, let $\coA{\derSCon}{\Pi}(\Gamma_0)$ denote the set 
containing every sentence $A\in\lang$ such that $\{A\}\cup\Pi\derSCon\Gamma_0$.
%
The elements of $\coD{\derSCon}{\Sigma}(\Gamma_1)$ might be thought of
as the sentences that one is committed to assert once the sentences in~$\Gamma_1$ are all asserted, in the context of the denial of all the sentences in~$\Sigma$; 
or informally as the theorems of the theory axiomatized by~$\Gamma_1$ modulo the denied sentences of~$\Sigma$.
Analogously, the elements of $\coA{\derSCon}{\Pi}(\Gamma_0)$ might be thought of
as the sentences that one is committed to deny once the sentences in~$\Gamma_0$ are all denied, in the context of the assertion of all the sentences in~$\Sigma$; 
or informally as the anti-theorems of the theory anti-axiomatized by~$\Gamma_0$ modulo the asserted sentences of~$\Pi$.
%
When the background context of judgment contains neither sentences to be taken by assumption as asserted nor sentences to be taken by assumption as denied, it should be clear, in particular, that $\coD{\derSCon}{\varnothing}$ and $\coA{\derSCon}{\varnothing}$ describe, respectively, the assertion-based and the denial-based $\TCon$-consequence operators associated to~$\derSCon$, introduced in Section~\ref{sec:consequence}.

From now on, 
for every $\Delta\subseteq\lang$, we will use~$\overline{\Delta}$ to refer to $\lang\setminus\Delta$.  
Moreover, 
we will write $(\Gamma_1,\Gamma_0)\subseteq (\Gamma_1',\Gamma_0')$ instead of both $\Gamma_1\subseteq\Gamma_1'$ and $\Gamma_0\subseteq\Gamma_0'$, and given a family $\{(\Pi_i,\Sigma_i)\}_{i\in I}\subseteq\wp(\lang)\times\wp(\lang)$ of consecutions, we will write $\bigcap_{i\in I}(\Pi_i,\Sigma_i)$ instead of $(\bigcap_{i\in I}\Pi_i,\bigcap_{i\in I}\Sigma_i)$.
The following result collects some fundamental properties of the operators associated to a given $\SCon$-consequence relation according to the above definitions:

\begin{prop}\label{prop:sfcrs}
Let $\derSCon$ be an $\SCon$-consequence relation on~$\lang$, and let $\Pi,\Sigma\subseteq \lang$. Then, we have:
\begin{itemize}
\item[{\rm (1)}] $\coD{\derSCon}{\Sigma}$ 
and $\coA{\derSCon}{\Pi}$ are $\TCon$-consequence operators~on $\lang\,$.

\item[{\rm (2)}] If $\Sigma\subseteq\Sigma'$ then $\coD{\derSCon}{\Sigma}\subseteq\coD{\derSCon}{\Sigma'}$, and if $\Pi\subseteq\Pi'$ then $\coA{\derSCon}{\Pi}\subseteq\coA{\derSCon}{\Pi'}$.

\item[{\rm (3)}] For each $\Omega\subseteq \lang$, either both $\coD{\derSCon}{\overline{\Omega}}(\Omega)=\Omega$ and $\coA{\derSCon}{\Omega}(\overline{\Omega})=\overline{\Omega}\,$, \\or else $\coD{\derSCon}{\overline{\Omega}}(\Omega)=\coA{\derSCon}{\Omega}(\overline{\Omega})=\lang\,$.

\item[{\rm (4)}] $\coD{\derSCon}{\Sigma}(\Pi)=\bigcap\limits_{\Omega\,\subseteq\,\lang{\scriptsize\mbox{\rm\ such that }}(\Omega,\overline{\Omega})\,\supseteq\,(\Pi,\Sigma)}\coD{\derSCon}{\overline{\Omega}}(\Omega)$\ \ and\\ $\coA{\derSCon}{\Pi}(\Sigma)=\bigcap\limits_{\Omega\,\subseteq\,\lang{\scriptsize\mbox{\rm\ such that }}(\Omega,\overline{\Omega})\,\supseteq\,(\Pi,\Sigma)}\coA{\derSCon}{\Omega}(\overline{\Omega})\,$.

\item[{\rm (5)}] 
If $\coA{\derSCon}{\Pi}(\Sigma)=\Sigma'$, then $\coD{\derSCon}{\Sigma}=\coD{\derSCon}{\Sigma'}$, 
and 
if $\coD{\derSCon}{\Sigma}(\Pi)=\Pi'$, then $\coA{\derSCon}{\Pi}=\coA{\derSCon}{\Pi'}$. 

\item[{\rm (6)}] 
If $\ndisjS{\coD{\derSCon}{\Sigma}(\Pi)}{\coA{\derSCon}{\Pi}(\Sigma)}$, then $\coD{\derSCon}{\Sigma}(\Pi)=\coA{\derSCon}{\Pi}(\Sigma)=\lang\,$.
\end{itemize}
\end{prop}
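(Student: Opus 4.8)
The plan is to reduce every item to the three defining closure properties of an $\SCon$-consequence relation --- dilution \ref{SCon:E}, overlap \ref{SCon:U}, and generalized cut \ref{SCon:A} --- and to exploit the left--right symmetry of those three properties, which interchanges assertions with denials and thereby turns each claim about $\coD{\derSCon}{\cdot}$ into the matching claim about $\coA{\derSCon}{\cdot}$. So in each item I would prove the assertion-based half and obtain the denial-based half by this duality.

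For (1) I would verify the axioms \ref{COpS:U}, \ref{COpS:E}, \ref{COpS:A} for $\coD{\derSCon}{\Sigma}$. Reflexivity \ref{COpS:U} is immediate from overlap \ref{SCon:U} (if $A\in\Gamma$ then $\Gamma$ meets $\Sigma\cup\{A\}$), and monotonicity \ref{COpS:E} is immediate from dilution \ref{SCon:E} (thicken the left-hand side by $\Delta$). Idempotence \ref{COpS:A} is the only point of substance: writing $\Theta=\coD{\derSCon}{\Sigma}(\Gamma)$ and assuming $\Theta\derSCon\Sigma\cup\{A\}$, I would apply cut \ref{SCon:A} with cut-set $\Delta=\Theta$ to conclude $\Gamma\derSCon\Sigma\cup\{A\}$; the premises hold because the partition with $\Theta'=\Theta$ is $\Theta\cup\Gamma\derSCon\Sigma\cup\{A\}$ (dilute the hypothesis), while any partition leaving some $\theta\in\Theta\setminus\Theta'$ on the right is covered by diluting the defining consecution $\Gamma\derSCon\Sigma\cup\{\theta\}$. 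Item (2) is a one-line dilution argument, and item (3) is a dichotomy on whether $\Omega\derSCon\overline{\Omega}$: if it fails, then reflexivity together with the observation that $A\notin\Omega$ forces $\overline{\Omega}\cup\{A\}=\overline{\Omega}$ pins the two operators to $\Omega$ and $\overline{\Omega}$ respectively; if it holds, then dilution makes both constantly $\lang$.

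Item (4) I would treat as the template for the cut-based arguments. The inclusion $\subseteq$ is pure dilution: from $\Pi\derSCon\Sigma\cup\{A\}$ and $\Pi\subseteq\Omega\subseteq\overline{\Sigma}$ one thickens to $\Omega\derSCon\overline{\Omega}\cup\{A\}$. For $\supseteq$, after disposing of the case $\ndisjS{\Pi}{\Sigma}$ by overlap, I would take the cut-set $\Delta=\overline{\Pi\cup\Sigma}$ and, for each $\Delta'\subseteq\Delta$, set $\Omega=\Pi\cup\Delta'$; this $\Omega$ is exactly a valid index, and the hypothesis $A\in\coD{\derSCon}{\overline{\Omega}}(\Omega)$ unfolds, using $\lang=\Pi\sqcup\Sigma\sqcup\Delta$, into precisely the cut premise $\Pi\cup\Delta'\derSCon\Sigma\cup\{A\}\cup(\Delta\setminus\Delta')$, so cut \ref{SCon:A} yields $\Pi\derSCon\Sigma\cup\{A\}$. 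Item (6) is the cleanest cut of all: a shared $C\in\coD{\derSCon}{\Sigma}(\Pi)\cap\coA{\derSCon}{\Pi}(\Sigma)$ gives both $\Pi\derSCon\Sigma\cup\{C\}$ and $\Pi\cup\{C\}\derSCon\Sigma$, which are exactly the two premises of cut on the singleton $\Delta=\{C\}$; hence $\Pi\derSCon\Sigma$, and dilution then collapses the relevant values of both operators to $\lang$.

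I expect item (5) to be the main obstacle. By item (2) one inclusion is free ($\Sigma\subseteq\Sigma'=\coA{\derSCon}{\Pi}(\Sigma)$ by reflexivity \ref{COpS:U}, whence $\coD{\derSCon}{\Sigma}\subseteq\coD{\derSCon}{\Sigma'}$), so the content is to cut the surplus denials $\Sigma'\setminus\Sigma$ back out: given $\Gamma\derSCon\Sigma'\cup\{A\}$ I would cut along $\Delta=\Sigma'\setminus\Sigma$, the empty part of the partition being the hypothesis itself, and every nonempty part being eliminated by diluting the defining consecution $\Pi\cup\{B\}\derSCon\Sigma$ of a surplus element $B$. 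The delicate point is precisely that this last step needs the asserted context $\Pi$ to sit on the left of the consecution being diluted, so the argument is cleanest --- and the intended identity most transparent --- on inputs $\Gamma$ that already contain $\Pi$ (in particular on $\Gamma=\Pi$, the instance on which the theory-pair closure of Section~\ref{sec:theories} actually relies); keeping careful track of this asserted context, and of the degenerate case where $\Sigma'=\lang$, is the part I would be most careful to get right. The denial-based statement then follows by the left--right duality noted at the outset.
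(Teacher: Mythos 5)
Your proposal is correct and follows the same overall strategy as the paper --- every item is reduced to \ref{SCon:E}, \ref{SCon:U} and \ref{SCon:A}, with the left--right symmetry of those axioms supplying the $\coA{\derSCon}{\Pi}$ halves --- but several of your individual arguments are genuinely different and, in places, tighter. For the idempotence step in (1) you cut over $\Delta=T_1$ where the paper cuts over $\Delta=\lang$ and must therefore dispose of the overlap cases separately; for (4) your choice $\Delta=\overline{\Pi\cup\Sigma}$, with $\Omega=\Pi\cup\Delta'$ ranging exactly over the admissible indices of the intersection, is a cleaner packaging of the same cut; and for (6) your direct singleton cut on a shared element $C$ (yielding $\Pi\derSCon\Sigma$, whence both closures equal $\lang$ by dilution) replaces the paper's more roundabout derivation of (6) from items (1), (5) and (4) via an empty intersection. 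Your (5) also differs in route: you cut the surplus denials $\Sigma'\setminus\Sigma$ out directly, whereas the paper obtains the converse inclusion by combining the characterizations (4) and (3).

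On the point where you hedge: your worry about (5) is exactly right, and it concerns the statement rather than your proof. Read as an equality of operators on all arguments, $\coD{\derSCon}{\Sigma}=\coD{\derSCon}{\Sigma'}$ is false in general: with $\Sem$ the set of Boolean valuations, $\Pi=\{p\}$ and $\Sigma=\{q\}$, one has $p\to q\in\Sigma'=\coA{\derSCon}{\Pi}(\Sigma)$, whence $p\in\coD{\derSCon}{\Sigma'}(\varnothing)$ (a valuation falsifying both $q$ and $p\to q$ must verify $p$) while $p\notin\coD{\derSCon}{\Sigma}(\varnothing)$. What is true --- and what your cut along $\Sigma'\setminus\Sigma$ proves, since diluting the defining consecution $\{B\}\cup\Pi\derSCon\Sigma$ into the required cut premise needs $\Pi$ to sit inside the left-hand side --- is the equality $\coD{\derSCon}{\Sigma}(\Gamma)=\coD{\derSCon}{\Sigma'}(\Gamma)$ for every $\Gamma\supseteq\Pi$. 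The paper's own argument for (5) is subject to the same restriction: its appeal to item (2) in case [c] uses $\Pi\subseteq\Omega$, so it too only establishes the equality at the argument $\Pi$. This restricted version is all that is used downstream (in item (6) and in Prop.~\ref{prop:sscr}, where (5) is applied at $\Pi'=\coD{\derSCon}{\Sigma}(\Pi)\supseteq\Pi$), so once you state the restricted form explicitly your proof is complete and nothing further needs repair.
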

\begin{proof}
We prove each of the listed properties, in turn:
\begin{itemize}
\item[(1)] We show that $\coD{\derSCon}{\Sigma}$ is a $\TCon$-consequence operator. The proof for $\coA{\derSCon}{\Pi}$ is analogous.

\ref{COpS:E}
If $\Gamma_1\subseteq\Gamma_1'$ and $\Gamma_1\derSCon \Sigma\cup\{A\}$, then $\Gamma_1'\derSCon \Sigma\cup\{A\}$ follows from \ref{SCon:E}, 
thus $\coD{\derSCon}{\Sigma}(\Gamma_1)\subseteq\coD{\derSCon}{\Sigma}(\Gamma_1')$.

\ref{COpS:U}
If $A\in\Gamma_1$ then $\Gamma_1\derSCon \Sigma\cup\{A\}$ follows from \ref{SCon:U},
for any $\Sigma\subseteq\lang$,
and thus $A\in\coD{\derSCon}{\Sigma}(\Gamma_1)$.

\ref{COpS:A}
Let $T_1:=\coD{\derSCon}{\Sigma}(\Gamma_1)$, and suppose $A\in\coD{\derSCon}{\Sigma}(T_1)$.  We want to show that $A\in T_1$.  
By assumption, we have that $T_1\derSCon\Sigma\cup\{A\}$.  
By definition, it is also the case that $\Gamma_1\derSCon\Sigma\cup\{B\}$ for each $B\in T_1$.
Take an arbitrary $\Omega\subseteq\lang$.  If either $\ndisjS{\Gamma_1}{\overline{\Omega}}$ or $\ndisjS{\Sigma}{\Omega}$ or $A\in\Omega$, then $\Omega\cup\Gamma_1\derSCon\Sigma\cup\{A\}\cup\overline{\Omega}$ follows from \ref{SCon:A}.
If, on the contrary, $\disjS{\Gamma_1}{\overline{\Omega}}$ and $\disjS{\Sigma}{\Omega}$ and $A\not\in\Omega$, then either 
$T_1\subseteq\Omega$
and so $\Omega\cup\Gamma_1\derSCon\Sigma\cup\{A\}\cup\overline{\Omega}$ follows by \ref{SCon:E} from $T_1\derSCon\Sigma\cup\{A\}$, or there exists a $B\in T_1$ such that $B\in\overline{\Omega}$ 
and so $\Omega\cup\Gamma_1\derSCon\Sigma\cup\{A\}\cup\overline{\Omega}$ follows by \ref{SCon:E} from $\Gamma_1\derSCon\Sigma\cup\{B\}$.
Thus, using \ref{SCon:A} with $\Delta=\lang$, we may conclude that $\Gamma_1\derSCon\Sigma\cup\{A\}$, and so $A\in \coD{\derSCon}{\Sigma}(\Gamma_1)=T_1$.

\item[(2)] If $\Sigma\subseteq\Sigma'$ and $\Gamma_1\derSCon A,\Sigma$, then $\Gamma_1\der A,\Sigma'$ follows from 
\ref{SCon:E}.
The proof of the other assertion is analogous.

\item[(3)] Let $\Omega\subseteq \lang$. 
If $\Omega\derSCon\overline{\Omega}$ then $\Omega\derSCon \overline{\Omega}\cup\{A\}$ and $\{A\}\cup\Omega\derSCon\overline{\Omega}$ for every $A\in \lang$, using \ref{SCon:E}, so $\coD{\derSCon}{\overline{\Omega}}(\Omega)=\coA{\derSCon}{\Omega}(\overline{\Omega})=\lang$.
Otherwise, let $\Omega\nder\overline{\Omega}$, where~$\nder$ denotes the complement of~$\derSCon$.  This would immediately imply, in particular, that $\Omega\nder\overline{\Omega}\cup\{A\}$ for every $A\in\overline{\Omega}$.  This means that $\disjS{\overline{\Omega}}{\coD{\derSCon}{\overline{\Omega}}(\Omega)}$.  
But we also know that $\Omega\subseteq\coD{\derSCon}{\overline{\Omega}}(\Omega)$, in view of \ref{COpS:U}, proved in item~(1) above.  It thus follows that $\coD{\derSCon}{\overline{\Omega}}(\Omega)=\Omega$.
A similar argument may be used to show that in such a situation we also have $\coA{\derSCon}{\Omega}(\overline{\Omega})=\overline{\Omega}$.

\item[(4)] We prove the first assertion; the second one is proved analogously. On the one hand, 
it should be clear that $\coD{\derSCon}{\Sigma}(\Pi)\subseteq\bigcap\limits_{\Omega\,\subseteq\,\lang{\scriptsize\mbox{\rm\ such that }}(\Omega,\overline{\Omega})\,\supseteq\,(\Pi,\Sigma)}\coD{\derSCon}{\overline{\Omega}}(\Omega)$.  
Indeed, for any $\Omega\subseteq\lang$ such that $(\Pi,\Sigma)\subseteq(\Omega,\overline{\Omega})$ we conclude from items (1) and (2) above that $\coD{\derSCon}{\Sigma}(\Pi)\subseteq\coD{\derSCon}{\overline{\Omega}}(\Omega)$.
For the converse inclusion, suppose that $(\Pi,\Sigma)\subseteq(\Omega,\overline{\Omega})$ implies
$A\in\coD{\derSCon}{\overline{\Omega}}(\Omega)$, and consider an arbitrary $\Omega\subseteq\lang$.
So, in case we do have $(\Pi,\Sigma)\subseteq(\Omega,\overline{\Omega})$, it obviously follows that $\Omega\cup\Pi\derSCon\Sigma\cup\{A\}\cup\overline{\Omega}$.  Otherwise, it must the case that $\ndisjS{\Pi}{\overline{\Omega}}$ or $\ndisjS{\Sigma}{\Omega}$, and in either situation \ref{SCon:U} gives us $\Omega\cup\Pi\derSCon\Sigma\cup\{A\}\cup\overline{\Omega}$.  We may now invoke \ref{SCon:A} to conclude that $\Pi\derSCon\Sigma\cup\{A\}$, that is, $A\in\coD{\derSCon}{\Sigma}(\Pi)$.

\item[(5)] 
Again, we check the first assertion in detail; the  second one is analogous.
Half of the proof is straightforward: 
Given that $\Sigma\subseteq\coA{\derSCon}{\Pi}(\Sigma)=\Sigma'$, in view of item (1) above, we conclude from item (2) above that $\coD{\derSCon}{\Sigma}(\Pi)\subseteq\coD{\derSCon}{\Sigma'}$.
For the converse inclusion, in view of item (4) above, we need to show that if 
[a]~$A\in\coD{\derSCon}{\overline{\Omega}}(\Omega)$ whenever 
$(\Pi,\coA{\derSCon}{\Pi}(\Sigma))\subseteq(\Omega,\overline{\Omega})$,
then 
[b]~$A\in\coD{\derSCon}{\overline{\Omega}}(\Omega)$ whenever 
$(\Pi,\Sigma)\subseteq(\Omega,\overline{\Omega})$.
So, assume [a] and let $\Omega\subseteq\lang$ be such that $(\Pi,\Sigma)\subseteq(\Omega,\overline{\Omega})$.
From the item (3) above we know that either [c]~$\coA{\derSCon}{\Omega}(\overline{\Omega})=\overline{\Omega}$ or [d]~$\coA{\derSCon}{\Omega}(\overline{\Omega})=\lang$.  
In case [c], given that $(\Pi,\Sigma)\subseteq(\Omega,\overline{\Omega})$ we know from items (1) and (2) that $\coA{\derSCon}{\Pi}(\Sigma)\subseteq\coA{\derSCon}{\Omega}(\overline{\Omega})=\overline{\Omega}$, and from [a] we conclude that $A\in\coD{\derSCon}{\overline{\Omega}}(\Omega)$.  In case [d], it is clear that $A\in\coD{\derSCon}{\overline{\Omega}}(\Omega)=\lang$.

\item[(6)] 
%
Let $\Pi':=\coD{\derSCon}{\Sigma}(\Pi)$ and $\Sigma':=\coA{\derSCon}{\Pi}(\Sigma)$.  We are assuming that $\ndisjS{\Pi'}{\Sigma'}$.
Using items (1), (5) and (4) above we may infer that $\Pi'=\coD{\derSCon}{\Sigma}(\Pi)=\coD{\derSCon}{\Sigma}(\coD{\derSCon}{\Sigma}(\Pi))=\coD{\derSCon}{\Sigma'}(\Pi')=\bigcap\limits_{\Omega\,\subseteq\,\lang{\scriptsize\mbox{\rm\ such that }}(\Omega,\overline{\Omega})\,\supseteq\,(\Pi',\Sigma')}\coD{\derSCon}{\overline{\Omega}}(\Omega)$.  Given the assumption that $\ndisjS{\Pi'}{\Sigma'}$, we see that in the latter chain of equalities we are talking about the intersection of an empty family of elements of~$\wp(\lang)$, from what we conclude that $\Pi'=\lang$.
The proof that $\Sigma'=\lang$ is analogous.
\qedhere
\end{itemize}
\end{proof}

The above result shows that there is in fact a plethora of $\TCon$-consequence operators that could be associated to a single given $\SCon$-consequence relation, and such operators may be collectively organized into a rich structure, being in particular monotonic with respect to the underlying contextual background of assumptions constituted by certain primitively asserted / denied sentences.  In reading the above properties, it is worth noting that the mentioned operators are completely determined by $\coD{\derSCon}{\overline{\Omega}}(\Omega)$ and $\coA{\derSCon}{\Omega}(\overline{\Omega})$, for $\Omega\subseteq\lang$.  The properties also suggest that the sets of axioms and anti-axioms that originate consistent `generalized theories' manage to separate in between theorems and anti-theorems, as further discussed in Section~\ref{sec:theories}.  Before building upon these properties, though, in proposing a full generalization of $\TCon$-consequence operators, we shall also analyze the situation concerning finitariness.

\begin{prop}\label{prop:finit}
Let $\derSCon$ be an $\SCon$-consequence relation on~$\lang$. Then, $\derSCon$ is finitary if and only if the following two properties hold for every $\Pi,\Sigma\subseteq \lang${\rm :}
\begin{itemize}
\item[\rm {[A]}] $\coD{\derSCon}{\Sigma}$ and $\coA{\derSCon}{\Pi}$ are finitary
\item[\rm {[B]}] $\coD{\derSCon}{\Sigma}=\bigcup\limits_{\mbox{\rm\scriptsize finite }\Sigma^\star\subseteq\,\Sigma}\coD{\derSCon}{\Sigma^\star}$\ and\ \ $\coA{\derSCon}{\Pi}=\bigcup\limits_{\mbox{\rm\scriptsize finite }\Pi^\star\subseteq\,\Pi}\coA{\derSCon}{\Pi^\star}$
\end{itemize}
\end{prop}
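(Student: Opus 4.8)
The plan is to prove the two implications separately, in each case unwinding the definitions of $\coD{\derSCon}{\Sigma}$ and $\coA{\derSCon}{\Pi}$ so that every claim becomes a statement about which consecutions belong to $\derSCon$, and then shuttling between the ``finite'' and ``full'' versions of these consecutions by means of the finitariness condition $(\condCRS{\fini})$ and the weakening property \ref{SCon:E}. Throughout, the assertion-based operators $\coD{\derSCon}{\Sigma}$ and the denial-based operators $\coA{\derSCon}{\Pi}$ are handled by dual arguments, so I would carry out each claim in detail for $\coD{}{}$ and merely indicate that the $\coA{}{}$ case is analogous.

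For the forward direction, assume $\derSCon$ is finitary. To establish [A] I would take $A\in\coD{\derSCon}{\Sigma}(\Gamma_1)$, i.e.\ $\Gamma_1\derSCon\Sigma\cup\{A\}$, apply $(\condCRS{\fini})$ to obtain a finite $\Gamma_1'\subseteq\Gamma_1$ and a finite subset of $\Sigma\cup\{A\}$ related by $\derSCon$, and then use \ref{SCon:E} to weaken the right-hand side back up to all of $\Sigma\cup\{A\}$; this yields $A\in\coD{\derSCon}{\Sigma}(\Gamma_1')$ with $\Gamma_1'$ finite, which is exactly $(\condCOT{\fini})$ for $\coD{\derSCon}{\Sigma}$. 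For [B], the inclusion $\bigcup_{\text{finite }\Sigma^\star\subseteq\Sigma}\coD{\derSCon}{\Sigma^\star}\subseteq\coD{\derSCon}{\Sigma}$ is immediate from monotonicity (item~(2) of Proposition~\ref{prop:sfcrs}); for the reverse inclusion I would again start from $\Gamma_1\derSCon\Sigma\cup\{A\}$, extract by $(\condCRS{\fini})$ a finite witness $\Theta\subseteq\Sigma\cup\{A\}$, set $\Sigma^\star:=\Theta\setminus\{A\}$ (a finite subset of $\Sigma$), and weaken to get $\Gamma_1\derSCon\Sigma^\star\cup\{A\}$, placing $A$ in $\coD{\derSCon}{\Sigma^\star}(\Gamma_1)$.

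For the converse, assume [A] and [B] and suppose $\Pi\derSCon\Sigma$; I must produce finite $\Pi'\subseteq\Pi$ and $\Sigma'\subseteq\Sigma$ with $\Pi'\derSCon\Sigma'$. When $\Sigma\neq\varnothing$ I would single out some $A\in\Sigma$, rewrite $\Pi\derSCon\Sigma$ as $A\in\coD{\derSCon}{\Sigma\setminus\{A\}}(\Pi)$, apply [B] to replace $\Sigma\setminus\{A\}$ by a finite $\Sigma^\star$, and then apply the finitariness [A] of $\coD{\derSCon}{\Sigma^\star}$ to shrink $\Pi$ to a finite $\Pi'$; the resulting $\Pi'\derSCon\Sigma^\star\cup\{A\}$ is the desired finite consecution.

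The main obstacle is the degenerate case $\Sigma=\varnothing$ (dually $\Pi=\varnothing$): there is no sentence on the right to single out, so the $\coD{}{}$ operators give no traction. Here I would exploit the symmetry of the set-up and run the dual argument with the denial-based operators $\coA{}{}$, picking instead some $A\in\Pi$, writing $A\in\coA{\derSCon}{\Pi\setminus\{A\}}(\varnothing)$, and applying the $\coA{}{}$-versions of [B] and [A]; the only remaining case, $\Pi=\Sigma=\varnothing$, gives $\varnothing\derSCon\varnothing$, which is already finite. A secondary point to watch is that each invocation of $(\condCRS{\fini})$ must not discard the distinguished sentence $A$, which is why $A$ is kept fixed and only the complementary parts $\Sigma\setminus\{A\}$ (resp.\ $\Pi\setminus\{A\}$) are thinned.
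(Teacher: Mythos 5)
Your proposal is correct and follows essentially the same route as the paper's proof: the forward direction extracts a finite witness via ($\condCRS{\fini}$) and redistributes it using monotonicity, and the converse absorbs a distinguished sentence $A$ into one of the operators and then applies [B] followed by [A] to shrink both sides. The only differences are cosmetic --- you pull $A$ from $\Sigma$ and work with $\coD{\derSCon}{\Sigma\setminus\{A\}}$ where the paper pulls $A$ from $\Pi$ and works with $\coA{\derSCon}{\Pi}$ (exploiting $\{A\}\cup\Pi=\Pi$), and your explicit weakening step to retain $A$ in the finite witness makes a point the paper leaves implicit.
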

\begin{proof}
First, assume that~$\derSCon$ is finitary. 
We prove that {[A$^\prime$]} $\coD{\derSCon}{\Sigma}$ is finitary and {[B$^\prime$]} $\coD{\derSCon}{\Sigma}=\bigcup\limits_{\mbox{\rm\scriptsize finite }\Sigma^\star\subseteq\,\Sigma}\coD{\derSCon}{\Sigma^\star}$; the proofs of the other halves of {[A]} and {[B]} are analogous. 
Take $\Gamma_1\cup\{A\}\subseteq\lang$ and suppose $A\in\coD{\derSCon}{\Sigma}(\Gamma_1)$, that is, $\Gamma_1\derSCon \Sigma\cup\{A\}$. As~$\derSCon$ is finitary, there must exist finite sets $\Gamma_1^\star\subseteq\Gamma_1$ and $\Delta^\star\subseteq\Delta$ such that $\Gamma_1^\star\derSCon \Delta^\star\cup\{A\}$.  This means that $A\in\coD{\derSCon}{\Sigma^\star}(\Gamma_1^\star)$.  
But we also know from Prop.~\ref{prop:sfcrs}(1)~\&~(2) that
$\coD{\derSCon}{\Sigma^\star}(\Gamma_1^\star)\subseteq\coD{\derSCon}{\Sigma}(\Gamma_1^\star)$ and 
$\coD{\derSCon}{\Sigma^\star}(\Gamma_1^\star)\subseteq\coD{\derSCon}{\Sigma^\star}(\Gamma_1)$, and we reach thereby the envisaged conclusions.

Assume now that {[A]} and {[B]} both hold good, and take $\Pi,\Sigma\subseteq \lang$ such that $\Pi\derSCon\Sigma$. 
If $\Pi\cup\Sigma$ is finite we are done. Otherwise, either~$\Pi$ or~$\Sigma$ is infinite, and hence non-empty. In case $\Pi\neq\varnothing$, let $A\in\Pi$. Clearly, we have $\{A\}\cup\Pi\derSCon\Sigma$ and so $A\in\coA{\derSCon}{\Pi}(\Sigma)$. 
Using {[A]} and {[B]}, we see that there must exist finite sets $\Pi^\star\subseteq\Pi$ and $\Sigma^\star\subseteq\Sigma$ such that $A\in\coA{\derSCon}{\Pi^\star}(\Sigma^\star)$. Therefore, we have $\{A\}\cup\Pi^\star\derSCon\Sigma^\star$. The case where $\Sigma\neq\varnothing$ is checked analogously.
\end{proof}

It should be clear, in view of the latter result, that finitariness of $\SCon$-consequence must be witnessed both from the viewpoint of axioms~/ anti-axioms and from the viewpoint of background asserted~/ denied assumptions.  This observation will be made particularly relevant later on, in Prop.~\ref{prop:fingco}.

Taking advantage of the previous results, we may say that each $\SCon$-consequence relation~$\der\;\subseteq\wp(\lang)\times\wp(\lang)$ induces a certain operator $\coS_\der:\wp(\lang)\times\wp(\lang)\longrightarrow\wp(\lang)\times\wp(\lang)$ by setting $\coS_\der\left(\Pi,\Sigma\right):=\left(\coD{\der}{\Sigma}\left(\Pi\right),\coA{\der}{\Pi}\left(\Sigma\right)\right)$.
The following properties are now immediate to check:

\begin{prop}\label{prop:sscr}
Let $\der$ be an $\SCon$-consequence relation on~$\lang$, and let $\Pi,\Sigma\subseteq\lang$. Then, we have:
\begin{itemize}
\item[\rm(1)] $\coS_{\der}$ is a closure operator on the partially ordered structure $\langle\wp(\lang)\times\wp(\lang),\subseteq\rangle$.
\item[\rm(2)]  $\coS_{\der}(\Pi,\Sigma)=\bigcap\limits_{\Omega\,\subseteq\,\lang\mbox{\rm\scriptsize\ such that }(\Omega,\overline{\Omega})\,\supseteq\,(\Pi,\Sigma)}\coS_{\der}(\Omega,\overline{\Omega})$.
\item[\rm(3)] Either\ \ $\coS_{\der}(\Omega,\overline{\Omega})=(\Omega,\overline{\Omega})$\ \ or\ \ 
$\coS_{\der}(\Omega,\overline{\Omega})=(\lang,\lang)$.
\end{itemize}
\end{prop}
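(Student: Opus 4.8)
The plan is to read everything coordinatewise and let Proposition~\ref{prop:sfcrs} do all the work, since both the order and the meet on $\wp(\lang)\times\wp(\lang)$ are computed componentwise by the conventions fixed just before the statement. Under this reduction parts~(2) and~(3) are essentially transcriptions of earlier items. For part~(3) I would unfold $\coS_{\der}(\Omega,\overline{\Omega})=(\coD{\der}{\overline{\Omega}}(\Omega),\coA{\der}{\Omega}(\overline{\Omega}))$ and invoke Proposition~\ref{prop:sfcrs}(3): either the first coordinate is $\Omega$ and the second is $\overline{\Omega}$, giving $(\Omega,\overline{\Omega})$, or both coordinates are $\lang$, giving $(\lang,\lang)$; no intermediate case occurs, which is exactly the stated dichotomy. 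For part~(2), since $\bigcap_{i}(\Pi_i,\Sigma_i)=(\bigcap_i\Pi_i,\bigcap_i\Sigma_i)$, the right-hand side is $(\bigcap_\Omega\coD{\der}{\overline{\Omega}}(\Omega),\bigcap_\Omega\coA{\der}{\Omega}(\overline{\Omega}))$, with both intersections ranging over the same family $\{\Omega:(\Omega,\overline{\Omega})\supseteq(\Pi,\Sigma)\}$; the two coordinates are then precisely the two identities of Proposition~\ref{prop:sfcrs}(4), so the claim follows at once.

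The substance is in part~(1), where I would check the three closure-operator axioms \ref{COpS:E}--\ref{COpS:A}, read now on the product order. Extensivity and monotonicity are routine. Extensivity is $\Pi\subseteq\coD{\der}{\Sigma}(\Pi)$ and $\Sigma\subseteq\coA{\der}{\Pi}(\Sigma)$, which hold because each coordinate map is a $\TCon$-consequence operator by Proposition~\ref{prop:sfcrs}(1), hence extensive by \ref{COpS:U}. Monotonicity in $(\Pi,\Sigma)$ I would obtain by composing monotonicity of each $\TCon$-operator in its own argument (again Proposition~\ref{prop:sfcrs}(1), via \ref{COpS:E}) with monotonicity in the background context supplied by Proposition~\ref{prop:sfcrs}(2): from $\Pi\subseteq\Pi'$ and $\Sigma\subseteq\Sigma'$ one gets $\coD{\der}{\Sigma}(\Pi)\subseteq\coD{\der}{\Sigma}(\Pi')\subseteq\coD{\der}{\Sigma'}(\Pi')$, and symmetrically for the second coordinate.

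The main obstacle is idempotency, where the two coordinate operators feed on each other's inputs. Writing $\Pi':=\coD{\der}{\Sigma}(\Pi)$ and $\Sigma':=\coA{\der}{\Pi}(\Sigma)$, I must show $\coS_{\der}(\Pi',\Sigma')=(\Pi',\Sigma')$, that is, $\coD{\der}{\Sigma'}(\Pi')=\Pi'$ and $\coA{\der}{\Pi'}(\Sigma')=\Sigma'$. Here Proposition~\ref{prop:sfcrs}(5) is the crucial ingredient that lets the background contexts be \emph{swapped}: from $\coA{\der}{\Pi}(\Sigma)=\Sigma'$ it yields $\coD{\der}{\Sigma'}=\coD{\der}{\Sigma}$, so $\coD{\der}{\Sigma'}(\Pi')=\coD{\der}{\Sigma}(\coD{\der}{\Sigma}(\Pi))=\coD{\der}{\Sigma}(\Pi)=\Pi'$ by the ordinary idempotency of the single $\TCon$-operator $\coD{\der}{\Sigma}$ (\ref{COpS:U} and \ref{COpS:A}); symmetrically, from $\coD{\der}{\Sigma}(\Pi)=\Pi'$ it yields $\coA{\der}{\Pi'}=\coA{\der}{\Pi}$, whence $\coA{\der}{\Pi'}(\Sigma')=\coA{\der}{\Pi}(\coA{\der}{\Pi}(\Sigma))=\Sigma'$. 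The delicate point to get right is exactly this use of Proposition~\ref{prop:sfcrs}(5): without it the ``mixed'' doubly-applied operators $\coD{\der}{\Sigma'}(\Pi')$ and $\coA{\der}{\Pi'}(\Sigma')$ do not obviously collapse, and it is the context-swap that reduces each to the idempotency of a fixed $\TCon$-operator.
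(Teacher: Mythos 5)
Your proposal is correct and follows essentially the same route as the paper: each item is reduced coordinatewise to the corresponding items of Proposition~\ref{prop:sfcrs}, with the context-swap of Proposition~\ref{prop:sfcrs}(5) doing the work for idempotency exactly as the paper intends. You merely spell out a detail the paper's one-line attribution glosses over, namely that monotonicity of $\coS_{\der}$ needs Proposition~\ref{prop:sfcrs}(2) in addition to \ref{prop:sfcrs}(1) because the background context varies along with the argument.
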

\begin{proof}
Concerning item~{(1)}, note that properties \ref{COpS:E} and \ref{COpS:U} follow from Prop.~\ref{prop:sfcrs}{(1)} 
and property \ref{COpS:A} follows from Prop.~\ref{prop:sfcrs}{(5)}.
Item~{(2)} follows from Prop.~\ref{prop:sfcrs}{(4)}, 
and 
item {(3)} follows from Prop.~\ref{prop:sfcrs}{(3)}.
\end{proof}

The above properties suggest the following abstract `bilateralist' definition, simultaneously generalizing the purely assertion-based and the purely denial-based $\TCon$-consequence operators associated to a given $\SCon$-consequence relation.  An \emph{$\SCon$-consequence operator on~$\lang$} is defined as a mapping $\coS:\wp(\lang)\times\wp(\lang)\longrightarrow\wp(\lang)\times\wp(\lang)$ that satisfies, for every $\Gamma_0,\Gamma_1,\Delta_0,\Delta_1\subseteq\lang$:
\begin{enumerate}[labelindent=\parindent, leftmargin=*,label=($\condCOS{\arabic*}$)]\setcounter{enumi}{-1}
	\item 
	\label{C2OpS:E}
	$\coS(\Gamma_1,\Gamma_0)\subseteq\coS(\Gamma_1\cup\Delta_1,\Gamma_0\cup\Delta_0)$	

	\item 
	\label{C2OpS:U}
	$(\Gamma_1,\Gamma_0)\subseteq\co2(\Gamma_1,\Gamma_0)$

	\item 
	\label{C2OpS:A}
	$\coS(\coS(\Gamma_1,\Gamma_0))\subseteq\coS(\Gamma_1,\Gamma_0)$

	\item 
	\label{C2OpS:T}
	$\coS(\Gamma_1,\Gamma_0)\supseteq\bigcap\limits_{\Omega\,\subseteq\,\lang\mbox{\rm\scriptsize\ such that }(\Omega,\overline{\Omega})\,\supseteq\,(\Gamma_1,\Gamma_2)}\coS_{\der}(\Omega,\overline{\Omega})$
\end{enumerate}	

Properties \ref{C2OpS:E}, \ref{C2OpS:U} and \ref{C2OpS:A} are precisely those required to make $\coS$ a closure operator on $\langle\wp(\lang)\times\wp(\lang),\subseteq\rangle$.  
They are clearly not sufficient, though, to capture the intimate dependencies between the thereby involved assertions and denials.  As it so happens, such dependencies are fully captured by property \ref{C2OpS:T}.  The latter property is so strong, however, that a simpler (though somewhat less familiar-looking) characterization of $\SCon$-consequence operators is now available:

\begin{prop}\label{prop:gco}
The mapping $\coS:\wp(\lang)\times\wp(\lang)\longrightarrow\wp(\lang)\times\wp(\lang)$ is an $\SCon$-consequence operator on $\lang$ if and only if the following properties both hold:
\begin{itemize}
	\item[\rm (V)] either\ \ $\coS(\Omega,\overline{\Omega})=(\Omega,\overline{\Omega})$\ \ or\ \ 
$\coS(\Omega,\overline{\Omega})=(\lang,\lang)$, for every $\Omega\subseteq\lang$
	\item[\rm (T)] 
		$\coS(\Gamma_1,\Gamma_0)=\bigcap\limits_{
	\substack{\Omega\,\subseteq\,\lang\mbox{\scriptsize\rm\ such that }\\(\Omega,\overline{\Omega})\,\supseteq\,(\Gamma_1,\Gamma_0)\mbox{\scriptsize\rm\ and }\coS(\Omega,\overline{\Omega})\,\neq\,(\lang,\lang)}}(\Omega,\overline{\Omega})$, for every $\Gamma_0,\Gamma_1\subseteq\lang$
\end{itemize}
\end{prop}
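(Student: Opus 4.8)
The plan is to prove both implications by exploiting two structural facts about the complete lattice $\langle\wp(\lang)\times\wp(\lang),\subseteq\rangle$. First, the meet of an empty family is the top element $(\lang,\lang)$. Second, the pairs of the form $(\Omega,\overline{\Omega})$ are exactly the $\subseteq$-maximal \emph{consistent} consecutions: a domination $(\Omega',\overline{\Omega'})\supseteq(\Pi,\Sigma)$ forces $\Pi\subseteq\Omega'$ and $\Sigma\subseteq\overline{\Omega'}=\lang\setminus\Omega'$, which can hold only when $\Pi\cap\Sigma=\varnothing$. This single observation drives every step.

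For the forward direction I assume $\coS$ satisfies \ref{C2OpS:E}--\ref{C2OpS:T}. To establish (V), I fix $\Omega$ and write $(\Pi',\Sigma'):=\coS(\Omega,\overline{\Omega})$; by \ref{C2OpS:U} we have $\Omega\subseteq\Pi'$ and $\overline{\Omega}\subseteq\Sigma'$. If both inclusions are equalities we are in the first case of (V). Otherwise the surplus element witnesses $\Pi'\cap\Sigma'\neq\varnothing$, since any $A\in\Pi'\setminus\Omega$ lies in $\overline{\Omega}\subseteq\Sigma'$, and symmetrically. Now idempotence (\ref{C2OpS:A} together with \ref{C2OpS:U}) gives $\coS(\Pi',\Sigma')=(\Pi',\Sigma')$, whereas applying \ref{C2OpS:T} at $(\Pi',\Sigma')$ forces $\coS(\Pi',\Sigma')\supseteq(\lang,\lang)$, because by the key observation no pair $(\Omega',\overline{\Omega'})$ can dominate a consecution with $\Pi'\cap\Sigma'\neq\varnothing$, so the intersection on the right runs over the empty family. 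Hence $(\Pi',\Sigma')=(\lang,\lang)$, the second case of (V). For the statement's (T), I would feed (V) into \ref{C2OpS:T}: each term $\coS(\Omega,\overline{\Omega})$ is either $(\Omega,\overline{\Omega})$ or the neutral $(\lang,\lang)$, so discarding the latter yields exactly the right-hand side of (T), giving the inclusion $\supseteq$. The reverse inclusion is immediate from monotonicity \ref{C2OpS:E}: for each consistent $\Omega$ with $(\Omega,\overline{\Omega})\supseteq(\Gamma_1,\Gamma_0)$ we get $\coS(\Gamma_1,\Gamma_0)\subseteq\coS(\Omega,\overline{\Omega})=(\Omega,\overline{\Omega})$.

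For the converse I assume (V) and (T) and verify the four defining clauses. Extensivity \ref{C2OpS:U} is immediate, as each pair in the defining intersection contains $(\Gamma_1,\Gamma_0)$. Monotonicity \ref{C2OpS:E} follows because enlarging the input shrinks the index family, and intersecting over fewer pairs can only enlarge the value. Clause \ref{C2OpS:T} is recovered by running the (V)-argument in reverse: replacing each $\coS(\Omega,\overline{\Omega})$ by $(\Omega,\overline{\Omega})$ or $(\lang,\lang)$ shows the full intersection coincides with the consistent-only one, which by the statement's (T) is precisely $\coS(\Gamma_1,\Gamma_0)$.

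The delicate clause, and the main obstacle, is idempotence \ref{C2OpS:A}. Here I would set $(\Pi',\Sigma'):=\coS(\Gamma_1,\Gamma_0)$ and prove that the index family $\{\Omega:(\Omega,\overline{\Omega})\supseteq(\Gamma_1,\Gamma_0)\text{ and }\coS(\Omega,\overline{\Omega})\neq(\lang,\lang)\}$ defining $\coS(\Gamma_1,\Gamma_0)$ coincides with the one defining $\coS(\Pi',\Sigma')$. One inclusion uses the already-derived extensivity, $(\Pi',\Sigma')\supseteq(\Gamma_1,\Gamma_0)$, so any pair dominating $(\Pi',\Sigma')$ also dominates $(\Gamma_1,\Gamma_0)$; the other uses that $(\Pi',\Sigma')$ is, by the statement's (T), the very intersection of the pairs in the first family, so each of them dominates $(\Pi',\Sigma')$. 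With the two families equal, (T) yields $\coS(\Pi',\Sigma')=(\Pi',\Sigma')$, i.e.\ $\coS\circ\coS=\coS$. The one point demanding care throughout is the bookkeeping of empty intersections: when $(\Gamma_1,\Gamma_0)$ admits no consistent valuation-pair extension the value is $(\lang,\lang)$, and I must check the family-equality argument degenerates correctly there, both families being empty since no pair $(\Omega,\overline{\Omega})$ extends $(\lang,\lang)$.
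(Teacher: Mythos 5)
Your proof is correct and follows essentially the same route as the paper's: (V) is obtained from the observation that any proper extension of a pair $(\Omega,\overline{\Omega})$ has overlapping components and therefore an empty dominating family in \ref{C2OpS:T}, (T) follows from \ref{C2OpS:T} together with (V) and monotonicity, and the converse is a direct verification in which idempotence is handled by comparing the two index families. Your explicit appeal to \ref{C2OpS:A} (with \ref{C2OpS:U}) when deriving (V), and your care with the degenerate empty-intersection cases, only make explicit what the paper's own argument uses tacitly.
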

\begin{proof}
Let $\coS$ be an $\SCon$-consequence operator. In order to establish property (V), let $(\Gamma_1,\Gamma_0):=\coS(\Omega,\overline{\Omega})$ and assume $(\Gamma_1,\Gamma_0)\neq(\Omega,\overline{\Omega})$, for some $\Omega\subseteq\lang$. By \ref{C2OpS:U}, we conclude that $(\Omega,\overline{\Omega})\subsetneq (\Gamma_1,\Gamma_0)$. Thus, either there exists some $A\in\Gamma_1\setminus\Omega$, and so $A\in \overline{\Omega}\subseteq\Gamma_0$, or else there exists some $A\in\Gamma_0\setminus\overline{\Omega}$, and so $A\in {\Omega}\subseteq\Gamma_1$.  In both cases we see that $\ndisjS{\Gamma_1}{\Gamma_0}$. Hence, from \ref{C2OpS:T} we know that $(\Gamma_1,\Gamma_0)\supseteq(\lang,\lang)$, for $(\Gamma_1,\Gamma_0)$ contains the intersection of an empty family of elements of~$\wp(\lang)\times\wp(\lang)$, and from this it follows that $\Gamma_0=\Gamma_1=\lang$.
Now, concerning property (T), note in particular that \ref{C2OpS:T} guarantees its right-to-left inclusion, in view of (V) and the fact that $(\Gamma_1,\Gamma_0)\cap(\lang,\lang)=(\Gamma_1,\Gamma_0)$.  For the converse inclusion, note that if there is some $\Omega\subseteq\lang$ such that $(\Gamma_1,\Gamma_0)\subseteq(\Omega,\overline{\Omega})$ then \ref{C2OpS:E} guarantees that $\coS(\Gamma_1,\Gamma_0)\subseteq\coS(\Omega,\overline{\Omega})$. 
Further knowing that $\coS(\Omega,\overline{\Omega})\neq(\lang,\lang)$, we may now use (V) to conclude that $\coS(\Omega,\overline{\Omega})=(\Omega,\overline{\Omega})$. Therefore, $\coS(\Gamma_1,\Gamma_0)\subseteq(\Omega,\overline{\Omega})$, and property (T) follows.

Conversely, assume now that $\coS$ satisfies (V) and (T). 
Since $(\Gamma_1,\Gamma_0)\subseteq(\Gamma_1\cup\Gamma_1',\Gamma_0\cup\Gamma_0')$ it is clear that $(\Gamma_1\cup\Gamma_1',\Gamma_0\cup\Gamma_0')\subseteq(\Omega,\overline{\Omega})$ implies $(\Gamma_1,\Gamma_0)\subseteq(\Omega,\overline{\Omega})$. 
Property (T) then guarantees that $\coS(\Gamma_1,\Gamma_0)\subseteq\coS(\Gamma_1\cup\Gamma_1',\Gamma_0\cup\Gamma_0')$, and property \ref{C2OpS:E} thus holds. 
Note that \ref{C2OpS:U} follows easily from (T) since the intersection is over pairs $(\Omega,\overline{\Omega})$ with $(\Gamma_1,\Gamma_0)\subseteq(\Omega,\overline{\Omega})$. 
Concerning property \ref{C2OpS:A}, and using (T) again, it suffices to note that if $(\Gamma,\Delta)\subseteq(\Omega,\overline{\Omega})$ then \ref{C2OpS:E} and (V) tell us that $\coS(\Gamma_1,\Gamma_0)\subseteq\coS(\Omega,\overline{\Omega})=(\Omega,\overline{\Omega})$.
Finally, property \ref{C2OpS:T} follows directly from~(T), given that $(\Gamma_1,\Gamma_0)\cap(\lang,\lang)=(\Gamma_1,\Gamma_0)$ and that property (V) states that if $\coS(\Omega,\overline{\Omega})\neq(\lang,\lang)$ then $\coS(\Omega,\overline{\Omega})=(\Omega,\overline{\Omega})$.
\end{proof}

It is worth noticing that \ref{C2OpS:A} had no role to play in proving the first part of the latter result.  Indeed, we have just checked in the course of the full proof of that result that \ref{C2OpS:A} follows from \ref{C2OpS:E}, \ref{C2OpS:U} and \ref{C2OpS:T}; as a matter of fact, we have only included it in the definition of $\SCon$-consequence operators for the sake of producing a more familiar-looking definition --- one that would be more easily comparable to the earlier, and standard, definition of $\TCon$-consequence operators.

As one would surely expect, an $\SCon$-consequence operator~$\coS$ will be called \emph{finitary} when it satisfies the following property, for every $\Gamma_0,\Gamma_1\subseteq\lang$:
\begin{itemize}[leftmargin=1.5cm]
	\item[(\condCOS{\fini})] $\coS(\Gamma_1,\Gamma_0)\subseteq
	\bigcup\limits_{\substack{\mbox{\scriptsize finite }\Gamma_1^\star\subseteq\,\Gamma_1\\ \mbox{\scriptsize finite }\Gamma_0^\star\subseteq\,\Gamma_0}}\coS(\Gamma_1^\star,\Gamma_0^\star)$.
\end{itemize}

\begin{prop}\label{prop:fingco}
An $\SCon$-consequence relation $\der$ is finitary if and only if $\coS_\der$ is finitary.
\end{prop}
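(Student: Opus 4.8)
The plan is to deduce everything from Proposition~\ref{prop:finit}, which already expresses the finitariness of~$\der$ as the conjunction of conditions [A] and [B] on the component operators $\coD{\der}{\Sigma}$ and $\coA{\der}{\Pi}$. Since $\coS_\der(\Gamma_1,\Gamma_0)=(\coD{\der}{\Gamma_0}(\Gamma_1),\coA{\der}{\Gamma_1}(\Gamma_0))$ and, by the convention fixed for $\bigcap$, the union of a family of consecutions is read componentwise, the defining inclusion $(\condCOS{\fini})$ of a finitary $\SCon$-consequence operator splits into two separate inclusions, one for each component. Writing out the first,
\[
\coD{\der}{\Gamma_0}(\Gamma_1)\subseteq\bigcup_{\substack{\text{finite }\Gamma_1^\star\subseteq\Gamma_1\\\text{finite }\Gamma_0^\star\subseteq\Gamma_0}}\coD{\der}{\Gamma_0^\star}(\Gamma_1^\star),
\]
the second being the analogous statement for $\coA{\der}{\Gamma_1}(\Gamma_0)$. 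It therefore suffices to show that these two ``doubly finite'' inclusions, taken together, are equivalent to [A] and [B]; the theorem then follows at once from Proposition~\ref{prop:finit}.

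For the implication from [A] and [B] to the doubly finite inclusions, I would start from $\coD{\der}{\Gamma_0}(\Gamma_1)$, use [B] (with $\Sigma=\Gamma_0$) to rewrite it as $\bigcup_{\text{finite }\Gamma_0^\star\subseteq\Gamma_0}\coD{\der}{\Gamma_0^\star}(\Gamma_1)$, and then apply [A] --- the finitariness, in the sense of $(\condCOT{\fini})$, of each operator $\coD{\der}{\Gamma_0^\star}$ --- to every term, pulling out a finite $\Gamma_1^\star\subseteq\Gamma_1$ inside each. This yields exactly the first doubly finite inclusion, and the $\coA$ case is symmetric. For the converse, I would peel the two parameters apart by monotonicity. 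To recover [A], absorb each finite $\Gamma_0^\star$ back into $\Gamma_0=\Sigma$ using monotonicity in the background denial parameter (Prop.~\ref{prop:sfcrs}(2)), so that $\coD{\der}{\Gamma_0^\star}(\Gamma_1^\star)\subseteq\coD{\der}{\Sigma}(\Gamma_1^\star)$ and the right-hand side collapses to $\bigcup_{\text{finite }\Gamma_1^\star\subseteq\Gamma_1}\coD{\der}{\Sigma}(\Gamma_1^\star)$, which is precisely $(\condCOT{\fini})$ for $\coD{\der}{\Sigma}$. To recover the nontrivial inclusion of [B], absorb instead each finite $\Gamma_1^\star$ into $\Gamma_1$ using monotonicity of the $\TCon$-consequence operator $\coD{\der}{\Gamma_0^\star}$ in its argument (property~\ref{COpS:E}, available by Prop.~\ref{prop:sfcrs}(1)), so that the right-hand side collapses to $\bigcup_{\text{finite }\Gamma_0^\star\subseteq\Gamma_0}\coD{\der}{\Gamma_0^\star}(\Gamma_1)$; the reverse inclusion of [B] is immediate from monotonicity in the background parameter. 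Again the $\coA$ statements are treated symmetrically.

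I expect the main obstacle to be exactly this peeling-apart in the converse direction: the single condition $(\condCOS{\fini})$ bundles the two set parameters into one simultaneous finiteness requirement, whereas [A] and [B] are two logically independent conditions, and separating them requires invoking two different monotonicities --- one in the background (denial, resp.\ assertion) parameter and one in the operator's argument. The remaining points are routine bookkeeping, the only things worth flagging being that $(\condCOS{\fini})$ is stated as an inclusion (its reverse being automatic from $\coS_\der$ being a closure operator, by Prop.~\ref{prop:sscr}(1)) and that the componentwise reading of the union of consecutions, consistent with the earlier $\bigcap$ convention, is what licenses the initial split into the two components.
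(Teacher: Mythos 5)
Your proposal is correct and follows essentially the same route as the paper's proof, which likewise reduces the statement to Proposition~\ref{prop:finit} by observing that conditions [A] and [B] are jointly equivalent to the componentwise ``doubly finite'' inclusions extracted from $(\condCOS{\fini})$. The only difference is that the paper leaves this equivalence as an observation, whereas you spell out both directions explicitly (via the two monotonicities from Prop.~\ref{prop:sfcrs}(1)--(2)), which is a faithful filling-in of the omitted details.
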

\begin{proof}
The result is immediate from the definition of the $\SCon$-consequence operator $\coS_\der$ induced by~$\der$, and by property (\condCOS{\fini}), if we observe that properties [A] and [B] of Prop.~\ref{prop:finit} are jointly equivalent to requiring for all $\Gamma_1,\Gamma_0\subseteq\lang$ that both
$\coD{\der}{\Gamma_0}(\Gamma_1)\subseteq\bigcup\limits_{\substack{\mbox{\scriptsize finite }\Gamma_1^\star\subseteq\,\Gamma_1\\ \mbox{\scriptsize finite }\Gamma_0^\star\subseteq\,\Gamma_0}}\coD{\der}{\Gamma_0^\star}(\Gamma_1^\star)$ and $\coA{\der}{\Gamma_1}(\Gamma_0)\subseteq\bigcup\limits_{\substack{\mbox{\scriptsize finite }\Gamma_1^\star\subseteq\,\Gamma_1\\ \mbox{\scriptsize finite }\Gamma_0^\star\subseteq\,\Gamma_0}}\coA{\der}{\Gamma_1^\star}(\Gamma_0^\star)$.
\end{proof}

We shall end the present section by showing that the newly introduced $\SCon$-con\-se\-quence operators are closely associated to $\SCon$-consequence relations, in precisely the same way as it was known to happen with $\TCon$-consequence. 
To that effect, we will say that an $\SCon$-consequence operator~$\coS$ induces a certain binary relation~$\der_{\coS}$ on~$\wp(\lang)$ by setting $\Pi\der_{\coS}\Sigma$ iff $\coS(\Pi,\Sigma)=(\lang,\lang)$.  

\begin{prop}\label{prop:ssok}
Let $\coS$ be an $\SCon$-consequence operator, and $\der$ be an $\SCon$-consequence relation, both on $\lang$.
We have that: 
\begin{itemize}
\item[\rm (1)] $\der_{\coS}$ is an $\SCon$-consequence relation on $\lang$.
\item[\rm (2)] ${\der_{{\coS}_\der}}{=}\der$ and $\coS_{\der_{\coS}}=\coS$.
\item[\rm (3)] $\der_{\coS}$ is finitary if and only if $\coS$ is finitary.
\end{itemize}
\end{prop}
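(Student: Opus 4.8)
The plan is to handle the three items in order, reducing item~(3) to item~(2), and to lean throughout on the characterization of $\SCon$-consequence operators by properties (V) and (T) from Prop.~\ref{prop:gco}. For item~(1) I would verify the three defining clauses \ref{SCon:E}, \ref{SCon:U}, \ref{SCon:A} of an $\SCon$-consequence relation directly for $\der_{\coS}$, recalling that $\Pi\der_{\coS}\Sigma$ abbreviates $\coS(\Pi,\Sigma)=(\lang,\lang)$ and that $(\lang,\lang)$ is the top of $\langle\wp(\lang)\times\wp(\lang),\subseteq\rangle$. Dilution \ref{SCon:E} is then immediate from \ref{C2OpS:E}: if $\coS(\Pi,\Sigma)=(\lang,\lang)$ then $\coS(\Pi'\cup\Pi,\Sigma\cup\Sigma')\supseteq(\lang,\lang)$, forcing equality. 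Overlap \ref{SCon:U} I would obtain from (T): when $\ndisjS{\Pi}{\Sigma}$, no $\Omega$ can satisfy both $\Pi\subseteq\Omega$ and $\Sigma\subseteq\overline{\Omega}$, so the intersection in (T) ranges over the empty family and yields $\coS(\Pi,\Sigma)=(\lang,\lang)$.

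For cut \ref{SCon:A} I would argue by contraposition. Assuming $\coS(\Pi,\Sigma)\neq(\lang,\lang)$, property~(T) produces some $\Omega$ with $(\Pi,\Sigma)\subseteq(\Omega,\overline{\Omega})$ and $\coS(\Omega,\overline{\Omega})\neq(\lang,\lang)$. Taking $\Delta'=\Delta\cap\Omega$, one checks that $\Delta\setminus\Delta'=\Delta\cap\overline{\Omega}\subseteq\overline{\Omega}$, so $(\Delta'\cup\Pi,\Sigma\cup(\Delta\setminus\Delta'))\subseteq(\Omega,\overline{\Omega})$; then \ref{C2OpS:E} gives $\coS(\Delta'\cup\Pi,\Sigma\cup(\Delta\setminus\Delta'))\subseteq\coS(\Omega,\overline{\Omega})\neq(\lang,\lang)$, which contradicts the cut hypothesis $\Delta'\cup\Pi\der_{\coS}\Sigma\cup(\Delta\setminus\Delta')$ instantiated at this $\Delta'$. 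Matching the cut-set $\Delta$ against the witnessing $\Omega$ in this way is the one step needing genuine care, and I expect it to be the main obstacle.

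For item~(2) I would first prove $\der_{\coS_\der}=\der$. Unfolding definitions, $\Pi\der_{\coS_\der}\Sigma$ means $\coD{\der}{\Sigma}(\Pi)=\lang$ and $\coA{\der}{\Pi}(\Sigma)=\lang$. If $\Pi\der\Sigma$, then \ref{SCon:E} gives $\Pi\der\Sigma\cup\{A\}$ and $\{A\}\cup\Pi\der\Sigma$ for every $A\in\lang$, so both sets equal $\lang$; conversely, choosing any $A\in\lang$ and feeding $\Pi\der\Sigma\cup\{A\}$ and $\{A\}\cup\Pi\der\Sigma$ into \ref{SCon:A} with $\Delta=\{A\}$ yields $\Pi\der\Sigma$. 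For $\coS_{\der_{\coS}}=\coS$ I would use item~(1): since $\der_{\coS}$ is an $\SCon$-consequence relation, $\coS_{\der_{\coS}}$ is an $\SCon$-consequence operator by Prop.~\ref{prop:sscr}, hence it too obeys (V) and (T). An operator obeying (V) and (T) is completely determined by the family $\{\Omega\subseteq\lang : \coS(\Omega,\overline{\Omega})\neq(\lang,\lang)\}$, so it suffices to match the two operators on the diagonal pairs. Here the already-proved identity, applied to $\der_{\coS}$, gives $\der_{\coS_{\der_{\coS}}}=\der_{\coS}$, whence $\coS_{\der_{\coS}}(\Omega,\overline{\Omega})=(\lang,\lang)\Leftrightarrow\Omega\,\der_{\coS}\,\overline{\Omega}\Leftrightarrow\coS(\Omega,\overline{\Omega})=(\lang,\lang)$; property~(V) then forces agreement on all diagonal pairs, and property~(T) propagates this agreement to every pair, giving $\coS_{\der_{\coS}}=\coS$.

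Finally, item~(3) follows with no further work: by item~(2) we have $\coS=\coS_{\der_{\coS}}$, and Prop.~\ref{prop:fingco} applied to the $\SCon$-consequence relation $\der_{\coS}$ asserts that $\der_{\coS}$ is finitary if and only if $\coS_{\der_{\coS}}$ is finitary; chaining the two equivalences yields that $\der_{\coS}$ is finitary if and only if $\coS$ is finitary. Apart from the cut verification in item~(1), the whole argument is bookkeeping around the (V)/(T) characterization and the singleton instance of cut.
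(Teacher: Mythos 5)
Your proposal is correct, and most of it tracks the paper's own proof: the verification of \ref{SCon:E} and \ref{SCon:U} for $\der_{\coS}$, the proof of ${\der_{{\coS}_\der}}{=}\der$ via the singleton instance of \ref{SCon:A} and dilution, and the reduction of item~(3) to item~(2) plus Prop.~\ref{prop:fingco} are all exactly what the paper does. You diverge in two places, both legitimately. For cut, the paper quietly works only with the instance $\Delta=\lang$ (reading the hypothesis as $\coS(\Omega,\overline{\Omega})=(\lang,\lang)$ for every $\Omega$ and applying (T) directly), leaving implicit the reduction of an arbitrary $\Delta$ to that instance via dilution; your contrapositive with the explicit witness $\Delta'=\Delta\cap\Omega$ handles arbitrary $\Delta$ head-on and is, if anything, more complete on that point. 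The more substantial difference is in the second half of item~(2): the paper unfolds $\coS_{\der_{\coS}}(\Pi,\Sigma)$ into the sets $T_1=\{A:\coS(\Pi,\Sigma\cup\{A\})=(\lang,\lang)\}$ and $T_0=\{A:\coS(\Pi\cup\{A\},\Sigma)=(\lang,\lang)\}$ and runs a direct two-way membership argument against (T) and (V), whereas you observe that any operator satisfying (V) and (T) is determined by its family of consistent diagonal pairs and then bootstrap from the already-proved identity applied to $\der_{\coS}$ to see that $\coS_{\der_{\coS}}$ and $\coS$ have the same such family. Your route is shorter and more conceptual, at the price of invoking item~(1) and Prop.~\ref{prop:sscr} (or Prop.~\ref{prop:gco}) to know in advance that $\coS_{\der_{\coS}}$ satisfies (V) and (T); the paper's element-wise check is more self-contained but more laborious. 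Both are sound.
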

\begin{proof}
We prove each of the above statemens, in turn:
\begin{itemize}
\item[\rm (1)] Concerning \ref{SCon:E}, given $\Pi\der_{\coS}\Sigma$ we have $\coS(\Pi,\Sigma)=(\lang,\lang)$ and from that we conclude using \ref{C2OpS:E} that $\coS(\Pi\cup\Pi',\Sigma\cup\Sigma')=(\lang,\lang)$, and so $\Pi'\cup\Pi\der_{\coS}\Sigma\cup\Sigma'$.  As for property \ref{SCon:U}, if $\ndisjS{\Pi}{\Sigma}$, then by invoking (T) with the intersection of an empty family of elements of~$\wp(\lang)$ we conclude that $\coS(\Pi,\Sigma)=(\lang,\lang)$, and so $\Pi\der_{\coS}\Sigma$.  Finally, concerning \ref{SCon:A}, suppose that $\Omega\cup\Pi\der_{\coS}\Sigma\cup\overline{\Omega}$, that is, $\coS(\Pi\cup\Omega,\Sigma\cup\overline{\Omega})=(\lang,\lang)$, for every $\Omega\subseteq\lang$.  In that case, given $(\Pi,\Sigma)\subseteq(\lang,\lang)$ the said hypothesis amounts more simply to $\coS(\Omega,\overline{\Omega})=(\lang,\lang)$. But then (T) immediately gives us $\coS(\Pi,\Sigma)=(\lang,\lang)$, that is, $\Pi\der_{\coS}\Sigma$.
\item[\rm (2)] We prove first that ${\der_{{\coS}_\der}}{=}\der$.  Note that $\Pi\der_{{\coS}_\der}\Sigma$ iff $\coS_\der(\Pi,\Sigma)=(\lang,\lang)$, by definition of the $\SCon$-con\-sequence relation induced by the $\SCon$-consequence operator~$\coS_\der$, and recall that $\coS_\der\left(\Pi,\Sigma\right)=\left(\coD{\der}{\Sigma}(\Pi),\coA{\der}{\Pi}(\Sigma)\right)$, by definition of the $\SCon$-consequence operator induced by the $\SCon$-consequence relation~$\der$.  Thus, $\Pi\der_{{\coS}_\der}\Sigma$ iff $\coD{\der}{\Sigma}(\Pi)=\lang=\coA{\der}{\Pi}(\Sigma)$ iff [a] both $\Pi\der\Sigma\cup\{A\}$ and $\{A\}\cup\Pi\der\Sigma$ for every $A\in\lang$.  On the one hand, taking $\Delta=\{A\}$ in \ref{SCon:A}, we may conclude from [a] that $\Pi\der\Sigma$.  On the other hand, from $\Pi\der\Sigma$ we may obtain [a] by using \ref{SCon:E}.

We show next that $\coS_{\der_{\coS}}=\coS$.  Unravelling the definitions, 
we may say that 
$\coS_{\der_{\coS}}(\Pi,\Sigma)=(T_1,T_0)$, where $T_1:=\{A\in\lang:\coS(\Pi,\Sigma\cup\{A\})=(\lang,\lang)\}$ and $T_0:=\{A\in\lang:\coS(\Pi\cup\{A\},\Sigma)=(\lang,\lang)\}$.
In view of (T) and (V), it would suffice to prove that $\coS(\Pi,\Sigma\cup\{A\})=(\lang,\lang)$ if and only if $A\in\Omega$ follows from assuming both $(\Pi,\Sigma)\subseteq(\Omega,\overline{\Omega})$ and $\coS(\Omega,\overline{\Omega})\neq(\lang,\lang)$, for any given~$\Omega\subseteq\lang$, as well as to prove an analogous result concerning $\coS(\Pi\cup\{A\},\Sigma)=(\lang,\lang)$ and $A\in\overline{\Omega}$.  We shall check in detail just the first stated equivalence.
Assume first that [b] $(\Pi,\Sigma)\subseteq(\Omega,\overline{\Omega})$ and [c] $\coS(\Omega,\overline{\Omega})\neq(\lang,\lang)$, for an arbitrary~$\Omega\subseteq\lang$, and suppose that [d] $\coS(\Pi,\Sigma\cup\{A\})=(\lang,\lang)$. From [d] and [b], it follows from (T) that $A\not\in\overline{\Omega}$, on the pain of contradicting [c], and so we conclude that $A\in\Omega$.
Next, suppose that $\coS(\Pi,\Sigma\cup\{A\})\neq(\lang,\lang)$.  From (T) we may then conclude that [e] $\coS(\Omega,\overline{\Omega})\neq(\lang,\lang)$ for some $\Omega\subseteq\lang$ such that [f] $(\Pi,\Sigma\cup\{A\})\subseteq(\Omega,\overline{\Omega})$.  But from [f] it follows that both [g] $(\Pi,\Sigma)\subseteq(\Omega,\overline{\Omega})$ and [h] $A\in\overline{\Omega}$ are the case.  Note that [h] means that $A\not\in\Omega$, in spite of both [e] and [g] being the case.
\item[\rm (3)] This follows from Prop.~\ref{prop:fingco}, using the fact that $\coS_{\der_{\coS}}=\coS$, proved in item (2).
\qedhere
\end{itemize}
\end{proof}

The above result makes it evident that, in a precise sense, choosing to work with $\SCon$-consequence relations or with $\SCon$-consequence operators is just a matter of convenience ---
we have just seen that there is a dual isomorphism between the lattice $\CR_\SCon(\lang)$ of $\SCon$-consequence relations on~$\lang$ and the lattice $\CoL_\SCon(\lang)$ defined by the set of all $\SCon$-consequence operators on~$\lang$ equipped with the inclusion ordering, generalizing the already mentioned corresponding well-known result concerning $\TCon$-consequence; we have also seen that the result carries over to the finitary case. %
Incidentally, insofar as convenience is involved, in the next section we will see that the newly introduced notion of $\SCon$-consequence operator allows one in fact to entertain a generalized notion of `theory' that manages to treat denials on a par with assertions.

\section{Generalizing the spaces of theories}
\label{sec:theories}

As mentioned in Section~\ref{sec:consequence}, the standard space of all theories of a given $\TCon$-consequence relation~$\derTCon$ has the structure of a complete lattice under inclusion.  It allows one to study the connections between different sets of axioms, or hypotheses, added on top of a given logic, and the corresponding collections of theorems thereby generated, or whereby concluded.  This is fully adequate, we admit, as long as one is interested, say, in seeing how the acceptance of certain judgments commit an agent to accepting other judgments (or, dually, in seeing how the rejection of certain judgments leads an agent to rejecting other judgments).  In the present paper, though, we are interested in analyzing the behavior of theories containing \textit{both} assertions and denials.  We want to impose no preference on acceptance over rejection, nor vice-versa, and want to consider instead theories that allow an agent to take either attitude with respect to given judgments, that is, theories that allow an agent to assert some sentences while simultaneously denying other sentences.  As we shall argue and illustrate, such a setting demonstrably conveys a greater expressive power for defining and reasoning about theories.


Given an $\SCon$-consequence relation~$\der$ on~$\lang$, a set $\Gamma_1\subseteq\lang$ of \emph{axioms} and a set $\Gamma_0\subseteq\lang$ of \emph{anti-axioms}, 
we call $\coS_{\der}(\Gamma_1,\Gamma_0)$ the \emph{$\der$-theory-pair axiomatized by $(\Gamma_1,\Gamma_0)$}. 
Where $(T_1,T_0):=\coS_{\der}(\Gamma_1,\Gamma_0)$, we call the elements of~$T_1$ the \emph{theorems} and call the elements of~$T_0$ the \emph{anti-theorems} of the given $\der$-theory-pair. 
We say that the pair $(\Gamma_1,\Gamma_0)$ is \emph{$\der$-consistent} if $\coS_{\der}(\Gamma_1,\Gamma_0)\neq (\lang,\lang)$, and \emph{$\der$-inconsistent} otherwise.

\begin{lem}\label{lemma:consistency}
Let $\der$ be an $\SCon$-consequence relation on~$\lang$ and 
let $(T_1,T_0)$ be the $\der$-theory-pair axiomatized by $(\Gamma_1,\Gamma_0)\in\wp(\lang)\times\wp(\lang)$.
The following properties are equivalent:
\begin{itemize}
\item[{\rm[a]}] $(\Gamma_1,\Gamma_0)$ is $\der$-inconsistent
\item[{\rm[b]}] $\ndisjS{T_1}{T_0}$
\item[{\rm[c]}] $\Gamma_1\der\Gamma_0$
\end{itemize}
\end{lem}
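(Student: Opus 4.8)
The plan is to establish the three-way equivalence by running the cycle of implications [c]\,$\Rightarrow$\,[a]\,$\Rightarrow$\,[b]\,$\Rightarrow$\,[c], in each case unwinding the definitions and appealing only to the structural properties \ref{SCon:E} and \ref{SCon:A} of~$\der$. Throughout I would write $(T_1,T_0)=\coS_\der(\Gamma_1,\Gamma_0)=(\coD{\der}{\Gamma_0}(\Gamma_1),\coA{\der}{\Gamma_1}(\Gamma_0))$, so that by definition $A\in T_1$ says exactly $\Gamma_1\der\Gamma_0\cup\{A\}$, while $A\in T_0$ says exactly $\{A\}\cup\Gamma_1\der\Gamma_0$; recall also that [a] unwinds to $(T_1,T_0)=(\lang,\lang)$.

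For [c]\,$\Rightarrow$\,[a] I would start from $\Gamma_1\der\Gamma_0$ and apply \ref{SCon:E}: adding $\{A\}$ on the right yields $\Gamma_1\der\Gamma_0\cup\{A\}$ for every $A\in\lang$, whence $T_1=\lang$, and adding $\{A\}$ on the left yields $\{A\}\cup\Gamma_1\der\Gamma_0$ for every $A\in\lang$, whence $T_0=\lang$; thus $(T_1,T_0)=(\lang,\lang)$, which is~[a]. The step [a]\,$\Rightarrow$\,[b] is then immediate, since $T_1=T_0=\lang$ together with the non-emptiness of~$\lang$ force $\ndisjS{T_1}{T_0}$.

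The one implication carrying genuine (if modest) content is [b]\,$\Rightarrow$\,[c], and this is where I expect the main---indeed the only---obstacle to lie. Here I would pick some $A\in T_1\cap T_0$; by the two readings recorded above, this gives simultaneously $\Gamma_1\der\Gamma_0\cup\{A\}$ and $\{A\}\cup\Gamma_1\der\Gamma_0$. These are precisely the two instances $\Delta'=\varnothing$ and $\Delta'=\{A\}$ of the antecedent of \ref{SCon:A} taken with $\Pi=\Gamma_1$, $\Sigma=\Gamma_0$ and $\Delta=\{A\}$, so \ref{SCon:A} delivers $\Gamma_1\der\Gamma_0$, namely~[c], closing the cycle. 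The only thing to get right is the bookkeeping that matches these two consecutions to the two cases of the premise of \ref{SCon:A}.

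Finally, I would remark that the lemma can also be read off from results already in hand: [a]\,$\Leftrightarrow$\,[c] is nothing but the identity $\der_{\coS_\der}=\der$ of Prop.~\ref{prop:ssok}(2), unwound through the definition $\Pi\der_{\coS}\Sigma$ iff $\coS(\Pi,\Sigma)=(\lang,\lang)$ of the relation induced by an $\SCon$-consequence operator, while [b]\,$\Rightarrow$\,[a] is exactly the instance $\Pi=\Gamma_1$, $\Sigma=\Gamma_0$ of Prop.~\ref{prop:sfcrs}(6). I would nonetheless keep the self-contained cycle as the primary argument, since it makes transparent which structural properties of $\SCon$-consequence are doing the work.
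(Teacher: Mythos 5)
Your proof is correct, and two of its three implications coincide with the paper's (the paper runs the same cycle, merely entering it at [a] rather than [c]): [c]$\Rightarrow$[a] via \ref{SCon:E} and [a]$\Rightarrow$[b] via non-emptiness of~$\lang$ are word-for-word the paper's arguments. Where you genuinely diverge is in [b]$\Rightarrow$[c]. The paper instantiates \ref{SCon:A} with $\Delta=\lang$, which forces a case analysis over \emph{every} $\Omega\subseteq\lang$: when $(\Gamma_1,\Gamma_0)\not\subseteq(\Omega,\overline{\Omega})$ the required consecution $\Omega\cup\Gamma_1\der\Gamma_0\cup\overline{\Omega}$ comes from \ref{SCon:U} via overlap, and otherwise from \ref{SCon:E} by splitting on whether $A\in\Omega$. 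You instead instantiate \ref{SCon:A} with $\Delta=\{A\}$, so that only the two subsets $\Delta'=\varnothing$ and $\Delta'=\{A\}$ need checking, and these are supplied verbatim by $A\in T_1$ and $A\in T_0$ respectively. Your bookkeeping is right: $\Delta'=\varnothing$ demands $\Gamma_1\der\Gamma_0\cup\{A\}$ and $\Delta'=\{A\}$ demands $\{A\}\cup\Gamma_1\der\Gamma_0$. This is strictly simpler than the paper's argument, avoids any appeal to \ref{SCon:U}, and in fact mirrors the ``taking $\Delta=\{A\}$ in \ref{SCon:A}'' step that the paper itself uses later in the proof of Prop.~\ref{prop:ssok}(2). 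Your closing observation that the lemma can alternatively be assembled from Prop.~\ref{prop:ssok}(2) and Prop.~\ref{prop:sfcrs}(6) is also sound and free of circularity, since both results are established before the lemma and independently of it; keeping the self-contained cycle as primary is the right editorial choice.
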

\begin{proof}
{[a]} implies {[b]}: If $(\Gamma_1,\Gamma_0)$ is $\der$-inconsistent then $T_1=T_0=\lang=T_0\cap T_1$.  To round off, just recall that we always assume~$\lang$ to be non-empty.
{[b]} implies {[c]}: If $A\in T_0\cap T_1$ then we have both $\Gamma_1\der \Gamma_0\cup\{A\}$ and $\{A\}\cup \Gamma_1\der \Gamma_0$. Given an arbitrary $\Omega\subseteq \lang$, note that $(\Gamma_1,\Gamma_0)\not\subseteq(\Omega,\overline{\Omega})$ implies $\Omega\cup\Gamma_1\der\Gamma_0\cup\overline{\Omega}$, in view of \ref{SCon:U}, while $(\Gamma_1,\Gamma_0)\subseteq(\Omega,\overline{\Omega})$ implies $\Omega\cup\Gamma_1\der\Gamma_0\cup\overline{\Omega}$ by case analysis over $A\in\Omega$ and $A\not\in\Omega$, in view of \ref{SCon:E}.
So, by using \ref{SCon:A} we conclude that $\Gamma_1\der \Gamma_0$.
{[c]} implies {[a]}: If $\Gamma_1\der\Gamma_0$ then \ref{SCon:E} guarantees that $\Gamma_1\der \Gamma_0\cup\{A\}$ and $\{A\}\cup\Gamma_1\der \Gamma_0$ for each $A\in\lang$. Thus $T_1=T_0=\lang$ and $(\Gamma_1,\Gamma_0)$ is $\der$-inconsistent.
\end{proof}


For a given $\SCon$-consequence relation~$\der$ we will use $\ty^\SCon(\der):=\{\coS_{\der}(\Gamma_1,\Gamma_0):\Gamma_0,\Gamma_1\subseteq L\}$ to refer to the \emph{space of all $\der$-theory-pairs}. 
As $\coS_{\der}$ is a closure operator, it follows that $\ty^\SCon(\der)$ equipped with the component-wise inclusion ordering constitutes a complete lattice, appropriately generalizing thus the situation for $\TCon$-consequence relations and thereby associated $\TCon$-consequence operators.
%
%
The phenomena illustrated in Section~\ref{sec:semantics} according to which $\TCon$-consequence relations in general fail absoluteness and also may in general be associated to several distinct $\SCon$-consequence relations have indicated that the traditional study of $\TCon$-consequence falls short in providing a full understanding of $\SCon$-consequence.
In contrast, it is clear that $\der$-theory-pairs provide a full-fledged generalization of the notion of theory for $\TCon$-consequence.
Indeed, given an $\SCon$-consequence relation $\derSCon$ on~$\lang$, the first component of the pair $\coS_{\derSCon}(\Gamma_1,\varnothing)$ amounts precisely to the assertion-based theory $\co_{{\derTCon}_{\!\derSCon}}(\Gamma_1)$, while the second component contains all anti-theorems that follow according to~$\derSCon$ in the context of taking~$\Gamma_1$ as background asserted assumptions; the situation is entirely dual with respect to $\coS_{\derSCon}(\varnothing,\Gamma_0)$, the denial-based theory $\co_{{{}_{\derSCon}}{\derTCon}}(\Gamma_0)$ that constitutes its second component and its corresponding first component consisting of anti-theorems that follow according to~$\derSCon$ in the context of taking~$\Gamma_0$ as background denied assumptions.
Note that $\der$-theory-pairs can go much beyond these particular cases, however, as they allow one to deal with theorems and anti-theorems that result from simultaneously taking both sets of axioms and anti-axioms as non-empty. 

It is worth looking at theory-pairs in the light of the semantic results of Section~\ref{sec:semantics}. 
Given a canonical semantics~$\Sem\subseteq\{0,1\}^\lang$, let's use $\coS_\Sem$ to denote $\coS_{\der_{\Sem}}$. For any $\Omega\subseteq\lang$, we have that $\coS_{\Sem}(\Omega,\overline{\Omega})\neq(\lang,\lang)$ if and only if $\Omega\not\der_{\Sem}\overline{\Omega}$ if and only if $(\Omega,\overline{\Omega})=(\one_\biv,\zero_\biv)$ for some $\biv\in \Sem$. Consequently, also generalizing the traditional setting, we have:
\begin{equation*}
\coS_{\Sem}(\Gamma_1,\Gamma_0)=\bigcap\limits_{\biv\,\in\,\Sem\mbox{\scriptsize\rm\ such that }(\one_\biv,\zero_\biv)\,\supseteq\, (\Gamma_1,\Gamma_0)}(\one_\biv,\zero_\biv).
\tag{$\star\star$} \label{eq:SspaceTh}
\end{equation*}
It should be clear that the pairs $(\one_\biv,\zero_\biv)$ with $\biv\in \Sem$ are 
not only fixed points of~$\coS$, but they actually cons\-titute precisely
the \emph{maximal} theory-pairs of $\ty^\SCon(\der_\Sem)$, in the sense that they are $\der_\Sem$-consistent but their only proper extension is the inconsistent pair $(\lang,\lang)$.
Note also that here the absoluteness of $\SCon$-consequence has the effect of making $\ty^\SCon(\der_\Sem)$ unique, for here each valuation has a `countermodelling' role to play.

Recall that the standard notion of \textit{theory} as a `closed set of sentences' is deeply connected to a closure operator~$\der$ in the following sense: the fixed points of the closure operator~$\co_\der$ identify precisely the sets of sentences that contain all of its $\TCon$-consequences, that is, if there is some $\Gamma\in\wp(\lang)$ such that $T=\co_\der(\Gamma)$, then $T\der A$ iff $A\in T$, in view of \ref{TCon:A} and \ref{TCon:U}.  This is now easily seen to generalize to our novel bilateralist notion of closure:

\begin{prop}\label{prop:closure}
Let $\derSCon$ be an $\SCon$-consequence relation.  If there is some $(\Gamma_1,\Gamma_0)\in\wp(\lang)\times\wp(\lang)$ such that $(T_1,T_0)=\coS_{\derSCon}(\Gamma_1,\Gamma_0)$, then: {\rm[a]} $T_1\der T_0\cup\{A\}$ iff $A\in T_1$ and {\rm [b]} $\{A\}\cup T_1\der T_0$ iff $A\in T_0$.
\end{prop}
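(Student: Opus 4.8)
The plan is to read the statement as the bilateralist generalization of the standard fact (quoted just before the proposition) that the closed sets of a $\TCon$-consequence operator are exactly its fixed points. The whole result should fall out of one observation: by Proposition~\ref{prop:sscr}(1) the operator $\coS_{\der}$ is a closure operator on $\langle\wp(\lang)\times\wp(\lang),\subseteq\rangle$, hence idempotent (the inclusion $\coS_{\der}(\coS_{\der}(X))\subseteq\coS_{\der}(X)$ is \ref{C2OpS:A}, and the reverse inclusion is \ref{C2OpS:U} applied to $\coS_{\der}(X)$). So the pair $(T_1,T_0)$, being a value of $\coS_{\der}$, is a fixed point.

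Concretely, first I would record that
\[
\coS_{\der}(T_1,T_0)=\coS_{\der}\bigl(\coS_{\der}(\Gamma_1,\Gamma_0)\bigr)=\coS_{\der}(\Gamma_1,\Gamma_0)=(T_1,T_0).
\]
Next I would unfold the definition $\coS_{\der}(T_1,T_0)=\bigl(\coD{\der}{T_0}(T_1),\coA{\der}{T_1}(T_0)\bigr)$ and compare components, obtaining the two fixed-point equalities $\coD{\der}{T_0}(T_1)=T_1$ and $\coA{\der}{T_1}(T_0)=T_0$. Finally I would translate each equality through the defining condition of the respective operator: by definition $A\in\coD{\der}{T_0}(T_1)$ iff $T_1\der T_0\cup\{A\}$, so the first equality is exactly [a], namely $T_1\der T_0\cup\{A\}$ iff $A\in T_1$; and $A\in\coA{\der}{T_1}(T_0)$ iff $\{A\}\cup T_1\der T_0$, so the second equality is exactly [b], namely $\{A\}\cup T_1\der T_0$ iff $A\in T_0$.

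The step I expect to carry the real content is the identification of the \emph{correct background contexts} in these two fixed-point equalities. Note that $T_0$ may be strictly larger than the anti-axioms $\Gamma_0$ (and $T_1$ larger than $\Gamma_1$); mere idempotence of the individual $\TCon$-consequence operators $\coD{\der}{\Gamma_0}$ and $\coA{\der}{\Gamma_1}$ would only yield [a] and [b] relative to $\Gamma_0$ and $\Gamma_1$, not to the closed sets $T_0$ and $T_1$ that the statement demands. The upgrade of the denial context from $\Gamma_0$ to $T_0$, and of the assertion context from $\Gamma_1$ to $T_1$, is exactly the phenomenon captured by Proposition~\ref{prop:sfcrs}(5), which is also precisely what makes $\coS_{\der}$ idempotent. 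Thus the only care required is to invoke the \emph{full} $\coS_{\der}$-idempotence from Proposition~\ref{prop:sscr}(1) rather than the componentwise one; everything else is routine unfolding of definitions.
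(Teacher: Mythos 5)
Your proposal is correct and follows essentially the same route as the paper: both arguments reduce the claim to the fixed-point equalities $\coD{\der}{T_0}(T_1)=T_1$ and $\coA{\der}{T_1}(T_0)=T_0$ and then unfold the definitions of the two operators. If anything, you are more explicit than the paper's own (rather compressed) proof about the one genuinely non-routine step --- upgrading the background contexts from $(\Gamma_1,\Gamma_0)$ to $(T_1,T_0)$ via Prop.~\ref{prop:sfcrs}(5), which is indeed what underwrites the idempotence of $\coS_{\der}$ in Prop.~\ref{prop:sscr}(1).
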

\begin{proof}
Recall from Section~\ref{sec:genConsOp} that $\coS_{\derSCon}\left(\Gamma_1,\Gamma_0\right):=\left(\coD{\derSCon}{\Gamma_0}\left(\Gamma_1\right),\coA{\derSCon}{\Gamma_1}\left(\Gamma_0\right)\right)$. 
Thus, given that $T_1$ is taken here as the set $\coD{\derSCon}{\Gamma_0}\left(\Gamma_1\right)$, we may invoke Prop.~\ref{prop:sfcrs}(1) to conclude that $T_1=\coD{\derSCon}{T_0}\left(T_1\right)$, which means precisely that $A\in T_1$ if and only if $T_1\der T_0\cup\{A\}$, as stated in item [a].  The proof of item [b] is analogous.
\end{proof}

\noindent
The latter result fully supports the intuition that led us to upgrade the view of a theory as a closed set of a `unilateralist' closure operator, which has a bias exclusively towards assertion (or, dually, exclusively towards denial), into a more generous framework in which we look for \emph{closed set-pairs} (pairs of sentences that satisfy items [a] and [b] in the statement of the above proposition) to fit our generalized notion of closure, which takes both assertions and denials equally into consideration.
It is worth indeed pointing out that, as in the case of $\TCon$-consequence, the bilateralist $\SCon$-consequence operators are uniquely determined by their associated spaces of theories.

We shall now briefly revisit the first specific illustration from Section~\ref{sec:semantics}, that of the $\SCon$-consequence relation for the im\-pli\-cation-only fragment of classical propositional logic, to show how the present notion of theory-pair provides a framework for the study of consequence that is strictly more expressive than the received one-sided standard notion of theory.  
Let $\derSCon$ be the mentioned $\SCon$-consequence relation and $\derTCon_{\derSCon}$ be the thereby induced $\TCon$-consequence relation.
Let $q\in P$ be a propositional variable, and consider the $\der$-theory-pair $(T_1,T_0):=\coS_{\derSCon}(\varnothing,\{q\})$.
Using the above characterization of $\coS_\Sem$, it is clear that such $\der$-theory-pair is semantically characterized by taking~$\Sem$ as the set of all Boolean valuations~$\biv$ such that $\biv(q)=0$. 
Hence, it follows in particular that $T_1=\co_{\derTCon}(\{q\to A:A\in\lang\})$. 
The present theoretical framework for $\SCon$-consequence has, in this case, two clear advantages.
First, note that it still allows one to obtain~$T_0$ as the set of all sentences evaluated to~$0$ by all Boolean valuations~$\biv\in\Sem$ such that $\biv(q)=0$. 
Second, we claim that~$T_1$ simply cannot be finitely axiomatized as a~$\derTCon$-theory (recall that negation is not expressible in the implication-only fragment of classical propositional logic), that is, for every finite $\Psi\subseteq\lang$ we have $\co_{\derTCon}(\Psi)\neq T_1$. To see this, given a finite $\Psi\subseteq\lang$ take any variable $r\neq q$ in~$P$ such that~$r$ does not occur in the sentences in~$\Psi$; let~$\biv$ be the canonical bivaluation such that $\biv(r)=0$ and $\biv(p)=1$ if $p\neq r$; then, it is immediate to see that $\Psi\subseteq\one_v$ yet $\biv(q\to r)=0$.

We will close this section with a result that further reinforces the adequacy of our present novel notion of theory-pair for $\SCon$-consequence relations.  As we have seen in the final illustration of Section~\ref{sec:semantics}, it may happen that the $\TCon$-consequence relation $\derTCon_{\derSCon}$ is finitary even when the $\SCon$-consequence relation that induces it is not finitary.  It is well-known 
(cf.\ Theorem 1.3.5 of \cite{Wojcicki:TLC})
however, that a $\TCon$-consequence relation is finitary precisely when its space of theories is `closed under ultraproducts'.  This suggests that $\TCon$-consequence fails somehow in capturing all the nuances of $\SCon$-consequence.  Nonetheless, we can now show that the mentioned result concerning closure under ultrafilters may be generalized in a very natural way with the help of $\derSCon$-theory-pairs.

For immediate subsequent use, we briefly recall a couple of standard definitions and facts 
(cf., e.g.,~\cite{ChangKeisler}).
Given a set~$I$, an \emph{ultrafilter on $I$} is a set $\mathcal{U}\subseteq\wp(I)$ satisfying the following conditions: {(U1)} ~$\varnothing\notin\mathcal{U}$;  {(U2)}~if $X\in\mathcal{U}$ and $X\subseteq Y$ then $Y\in\mathcal{U}$; {(U3)}~if $X,Y\in\mathcal{U}$ then $X\cap Y\in \mathcal{U}$; {(U4)}~if $X\subseteq I$, then either $X\in \mathcal{U}$ or $I \setminus X\in \mathcal{U}$. Further, $\mathcal{W}\subseteq\wp(I)$ is said to enjoy the \emph{finite intersection property} if every finite subset $\{X_1,\dots,X_n\}\subseteq\mathcal{W}$ is such that $X_1\cap\dots\cap X_n\neq\varnothing$. 
It is useful to bear in mind that the Ultrafilter Lemma states that for every $\mathcal{W}\subseteq\wp(I)$ with the finite intersection property there exists an ultrafilter $\mathcal{U}$ such that $\mathcal{W}\subseteq\mathcal{U}$. 
Finally, the \emph{ultraproduct $\mbox{\Cross}_{\mathcal{U}}(\mathcal{T})$ of a family $\mathcal{T}:=\{(\Pi_i,\Sigma_i)\}_{i\in I}\subseteq\wp(\lang)\times\wp(\lang)$ modulo an ultrafilter $\mathcal{U}$ on~$I$} is the pair $(\Pi_{\mathcal{U}},\Sigma_{\mathcal{U}})$ with
$\Pi_{\mathcal{U}}=\{A\in\lang:\{i\in I:A\in\Pi_i\}\in\mathcal{U}\}\textrm{ and }\Sigma_{\mathcal{U}}=\{A\in\lang:\{i\in I:A\in\Sigma_i\}\in\mathcal{U}\}$.

\begin{lem}\label{prop:ultracons}
Let $\der$ be an $\SCon$-consequence relation on~$\lang$, let $\mathcal{T}:=\{(\Pi_i,\Sigma_i)\}_{i\in I}\subseteq\wp(\lang)\times\wp(\lang)$, and let~$\mathcal{U}$ be an ultrafilter on~$I$. If $(\Pi_i,\Sigma_i)$ is $\der$-consistent for each $i\in I$ then $\mbox{\Cross}_{\mathcal{U}}(\mathcal{T})$ is $\der$-consistent.
\end{lem}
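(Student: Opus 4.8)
The plan is to argue by contraposition, reducing the $\der$-inconsistency of the ultraproduct to the $\der$-inconsistency of a single one of its coordinates, with finitariness of~$\der$ supplying the bridge between the two. By Lemma~\ref{lemma:consistency} (the equivalence of [a] and [c]), a pair $(\Gamma_1,\Gamma_0)$ is $\der$-inconsistent exactly when $\Gamma_1\der\Gamma_0$; so it suffices to show that $\Pi_{\mathcal{U}}\der\Sigma_{\mathcal{U}}$ would force $\Pi_{i_0}\der\Sigma_{i_0}$ for some $i_0\in I$, contradicting the assumed consistency of every coordinate. Accordingly I would begin by assuming $\mbox{\Cross}_{\mathcal{U}}(\mathcal{T})=(\Pi_{\mathcal{U}},\Sigma_{\mathcal{U}})$ to be $\der$-inconsistent, i.e.\ $\Pi_{\mathcal{U}}\der\Sigma_{\mathcal{U}}$.

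The crucial move is then to invoke finitariness of~$\der$, property~($\condCRS{\fini}$), to obtain finite subsets $\Pi^\star\subseteq\Pi_{\mathcal{U}}$ and $\Sigma^\star\subseteq\Sigma_{\mathcal{U}}$ with $\Pi^\star\der\Sigma^\star$. This reduction to a \emph{finite} witness of inconsistency is exactly the step where the hypothesis is genuinely needed: an ultrafilter guarantees that nonempty finite intersections of its members are again members, but it says nothing about infinite intersections, so unless the inconsistency is first cut down to finitely many sentences there is no single coordinate in which to land.

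With the finite witness in hand, I would locate that coordinate. By the definition of the ultraproduct, for each $A\in\Pi^\star$ the set $\{i\in I:A\in\Pi_i\}$ lies in~$\mathcal{U}$, and for each $B\in\Sigma^\star$ the set $\{i\in I:B\in\Sigma_i\}$ lies in~$\mathcal{U}$. Since $\Pi^\star\cup\Sigma^\star$ is finite, closure of~$\mathcal{U}$ under finite intersections (U3) places the intersection~$Z$ of all these sets in~$\mathcal{U}$, whence $Z\neq\varnothing$ by~(U1). Fixing any $i_0\in Z$ then yields $\Pi^\star\subseteq\Pi_{i_0}$ and $\Sigma^\star\subseteq\Sigma_{i_0}$ simultaneously.

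Finally, monotonicity~\ref{SCon:E} applied to $\Pi^\star\der\Sigma^\star$ gives $\Pi_{i_0}\der\Sigma_{i_0}$, so $(\Pi_{i_0},\Sigma_{i_0})$ is $\der$-inconsistent by Lemma~\ref{lemma:consistency}, against hypothesis, completing the contraposition. The main obstacle, as flagged, is concentrated entirely in the appeal to finitariness: it is the only place where the finite-intersection structure of an ultrafilter, rather than arbitrary intersections, is what one has available. This is not a mere convenience but the genuine content of the result, since the non-finitary $\SCon$-consequence relation of Section~\ref{sec:semantics} (for which $\{\exists\}\der\nat$ while $\{\exists\}\nder\Psi$ for every finite $\Psi\subseteq\nat$) already witnesses, via a free ultrafilter on~$\nat$ applied to the consistent pairs $(\{\exists\},\{0,\dots,i\})$, that consistency of coordinates need not survive passage to an ultraproduct when~$\der$ fails to be finitary.
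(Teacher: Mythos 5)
Your argument is internally sound, but it does not establish the lemma as stated: the statement imposes no finitariness hypothesis on~$\der$, and your first substantive step is an appeal to ($\condCRS{\fini}$). What makes this more than a pedantic point is that your closing observation is correct: taking the $\der$-consistent pairs $(\{\exists\},\{0,\dots,i\})$ from the final example of Section~\ref{sec:semantics} and a free ultrafilter on~$\nat$, the ultraproduct is $(\{\exists\},\nat)$, which is $\der$-inconsistent by Lemma~\ref{lemma:consistency} since $\{\exists\}\der\nat$. Under the official definition of $\der$-consistency (namely $\coS_\der(\Gamma_1,\Gamma_0)\neq(\lang,\lang)$), the lemma as literally printed is therefore false, so no hypothesis-free proof of it can exist; adding finitariness, as you do, is one legitimate repair, and your argument for that repaired statement (finite witness of $\Pi_{\mathcal{U}}\der\Sigma_{\mathcal{U}}$, then (U3) and (U1) to locate a single coordinate containing it, then \ref{SCon:E}) is correct and in fact mirrors the technique the paper itself uses in the left-to-right half of Prop.~\ref{prop:ultra}.

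The paper's own proof of the lemma takes an entirely different and much shorter route, and in effect repairs the statement differently. It reads ``$\mbox{\Cross}_{\mathcal{U}}(\mathcal{T})$ is $\der$-inconsistent'' as ``$(\Pi_{\mathcal{U}},\Sigma_{\mathcal{U}})=(\lang,\lang)$'', fixes any single $A\in\lang$, intersects the two ultrafilter sets $\{i\in I:A\in\Pi_i\}$ and $\{i\in I:A\in\Sigma_i\}$ to find a coordinate with $\ndisjS{\Pi_i}{\Sigma_i}$, and concludes by Lemma~\ref{lemma:consistency}. No finitariness is needed, but what is actually proved is only that the ultraproduct is not literally the full pair --- a condition equivalent to $\der$-consistency precisely when the ultraproduct is a fixed point of $\coS_\der$. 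That is exactly the situation in the lemma's sole application (the right-to-left half of Prop.~\ref{prop:ultra}), where closure of $\ty^\SCon(\der)$ under ultraproducts guarantees that $\mbox{\Cross}_{\mathcal{U}}(\mathcal{T})$ is itself a theory-pair. In short: your route trades an extra hypothesis (finitariness) for a genuinely correct general statement, while the paper's route trades a weaker conclusion (the ultraproduct is not $(\lang,\lang)$) for a hypothesis-free two-line argument that suffices where the lemma is used; your counterexample shows that some such trade is unavoidable.
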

\begin{proof}
Suppose that $\mbox{\Cross}_{\mathcal{U}}(\mathcal{T})$ is $\der$-inconsistent, that is, $(\Gamma_{\mathcal{U}},\Delta_{\mathcal{U}})=(\lang,\lang)$. Take $A\in\lang$. This means that $\{i\in I:A\in\Pi_i\}\in\mathcal{U}$ and $\{i\in I:A\in\Sigma_i\}\in\mathcal{U}$, and therefore also $\{i\in I:A\in\Pi_i\mbox{ and }A\in\Sigma_i\}=\{i\in I:A\in\Pi_i\}\cap\{i\in I:A\in\Sigma_i\}\in\mathcal{U}$ and hence $\{i\in I:A\in\Pi_i\mbox{ and }A\in\Sigma_i\}\neq\varnothing$. Thus, $\ndisjS{\Pi_i}{\Sigma_i}$ for some $i\in I$ and, by Lemma~\ref{lemma:consistency}, $(\Pi_i,\Sigma_i)$ is $\der$-inconsistent.
\end{proof}

\begin{prop}\label{prop:ultra}
Let $\der$ be an $\SCon$-consequence relation on~$\lang$. Then, $\der$ is finitary if and only if $\ty^\SCon(\der)$ is closed under ultraproducts.
\end{prop}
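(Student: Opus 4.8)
The plan is to reduce membership in $\ty^\SCon(\der)$ to a fixed-point condition and then prove the two implications in turn. By Prop.~\ref{prop:sscr}(1) the map $\coS_\der$ is a closure operator, so $\ty^\SCon(\der)$ is exactly its set of fixed points; and by Prop.~\ref{prop:closure}, together with an unfolding of the definition of $\coS_\der$, a pair $(T_1,T_0)$ is such a fixed point if and only if it is \emph{closed} in the sense that {\rm[a]}~$T_1\der T_0\cup\{A\}$ iff $A\in T_1$ and {\rm[b]}~$\{A\}\cup T_1\der T_0$ iff $A\in T_0$, for every $A\in\lang$. Thus, writing $(\Theta_1,\Theta_0):=\mbox{\Cross}_{\mathcal{U}}(\mathcal{T})$ for the ultraproduct of a family $\mathcal{T}=\{(T_1^i,T_0^i)\}_{i\in I}$ of theory-pairs modulo an ultrafilter~$\mathcal{U}$, closure under ultraproducts is the assertion that $(\Theta_1,\Theta_0)$ again satisfies {\rm[a]} and {\rm[b]}. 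In each of {\rm[a]}, {\rm[b]} one implication is free: e.g.\ $A\in\Theta_1$ forces $\Theta_1\cap(\Theta_0\cup\{A\})\neq\varnothing$, whence $\Theta_1\der\Theta_0\cup\{A\}$ by \ref{SCon:U}; the content lies in the converses.

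For the direction from finitariness to closure I would establish the nontrivial half of {\rm[a]} (the half of {\rm[b]} being symmetric). Suppose $\Theta_1\der\Theta_0\cup\{A\}$. By finitariness ($\condCRS{\fini}$) together with \ref{SCon:E} there are finite sets $\Pi^\star\subseteq\Theta_1$ and $\Sigma^\star\subseteq\Theta_0$ with $\Pi^\star\der\Sigma^\star\cup\{A\}$. By definition of the ultraproduct, $\{i:B\in T_1^i\}\in\mathcal{U}$ for each of the finitely many $B\in\Pi^\star$ and $\{i:C\in T_0^i\}\in\mathcal{U}$ for each $C\in\Sigma^\star$, so by (U3) their common intersection $Z$ lies in~$\mathcal{U}$ and is in particular nonempty. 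For any $i\in Z$ we have $\Pi^\star\subseteq T_1^i$ and $\Sigma^\star\subseteq T_0^i$, so \ref{SCon:E} promotes $\Pi^\star\der\Sigma^\star\cup\{A\}$ to $T_1^i\der T_0^i\cup\{A\}$; since $(T_1^i,T_0^i)$ is itself a theory-pair, property {\rm[a]} for it yields $A\in T_1^i$. Hence $\{i:A\in T_1^i\}\supseteq Z\in\mathcal{U}$, and therefore $A\in\Theta_1$, as needed.

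For the converse I would proceed contrapositively, manufacturing from a failure of finitariness a family of theory-pairs whose ultraproduct leaves $\ty^\SCon(\der)$. Fix a witness $\Pi\der\Sigma$ with $\Pi^\star\nder\Sigma^\star$ for all finite $\Pi^\star\subseteq\Pi$ and $\Sigma^\star\subseteq\Sigma$ (this forces $\Pi\cap\Sigma=\varnothing$, else a singleton sub-pair would already derive by \ref{SCon:U}). Index by the finite sub-pairs, $I=\{(\Pi^\star,\Sigma^\star):\Pi^\star\subseteq\Pi,\ \Sigma^\star\subseteq\Sigma\ \text{finite}\}$, and put $(T_1^i,T_0^i):=\coS_\der(\Pi^\star,\Sigma^\star)$; since $\Pi^\star\nder\Sigma^\star$, Lemma~\ref{lemma:consistency} makes each such pair $\der$-consistent, hence $T_1^i\cap T_0^i=\varnothing$, and it lies in $\ty^\SCon(\der)$. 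The family of sets $\{\,i:A\in\Pi^\star\,\}$ (for $A\in\Pi$) and $\{\,i:B\in\Sigma^\star\,\}$ (for $B\in\Sigma$) has the finite intersection property, so the Ultrafilter Lemma provides an ultrafilter~$\mathcal{U}$ containing all of them. Since $\Pi^\star\subseteq T_1^i$ and $\Sigma^\star\subseteq T_0^i$, tracing the definitions gives $\Pi\subseteq\Theta_1$ and $\Sigma\subseteq\Theta_0$; then \ref{SCon:E} with $\Pi\der\Sigma$ yields $\Theta_1\der\Theta_0$, so $(\Theta_1,\Theta_0)$ is $\der$-inconsistent. On the other hand, each component pair has disjoint sides, so (U1) and (U3) force $\Theta_1\cap\Theta_0=\varnothing$, whence $(\Theta_1,\Theta_0)\neq(\lang,\lang)$. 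But an inconsistent theory-pair, being a $\coS_\der$-fixed point whose closure is $(\lang,\lang)$, must itself equal $(\lang,\lang)$; hence $(\Theta_1,\Theta_0)\notin\ty^\SCon(\der)$ and the space is not closed under ultraproducts.

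The main obstacle is the converse inclusion in the forward direction: one must descend from the single \emph{global} consecution $\Theta_1\der\Theta_0\cup\{A\}$ to a consecution holding at some individual index~$i$, and finitariness is exactly what makes this possible, since it replaces the infinite consecution by a finite one that (U3) can localize on a $\mathcal{U}$-large set of indices. The reverse direction is largely bookkeeping, its one delicate point being that $\der$-consistency of the components secures only $\Theta_1\cap\Theta_0=\varnothing$ and not genuine $\der$-consistency of $(\Theta_1,\Theta_0)$ — and it is precisely this gap, vacuous in the finitary case, that powers the counterexample.
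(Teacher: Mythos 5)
Your proof is correct and follows essentially the same route as the paper's: the same identification of $\ty^\SCon(\der)$ with the closed set-pairs of Prop.~\ref{prop:closure}, the same finitariness-plus-(U3) localization argument for the forward direction, and the same ultrafilter generated by the finite sub-pairs of a putative counterexample to finitariness for the converse. The only divergence is organizational --- you run the converse contrapositively and establish the disjointness of $\Theta_1$ and $\Theta_0$ directly instead of invoking Lemma~\ref{prop:ultracons}, which, as your closing remark correctly observes, handles the consistency of the ultraproduct a bit more transparently than the paper does (the lemma's conclusion of genuine $\der$-consistency really needs the ultraproduct to already be a fixed point, which is exactly what your argument supplies).
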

\begin{proof}
Assume that $\der$ is a finitary $\SCon$-consequence relation. Let $\mathcal{U}$ be an ultrafilter on~$I$, and consider a family $\mathcal{T}:=\{(X_i,Y_i)\}_{i\in I}\subseteq\ty^\SCon(\der)$, that is, for each $i\in I$, assume that there exist $\Pi_i,\Sigma_i\subseteq\lang$ such that $X_i=\coD{\der}{\Sigma_i}(\Pi_i)$ and $Y_i=\coA{\der}{\Pi_i}(\Sigma_i)$. 
To show that $(\Pi_{\mathcal{U}},\Sigma_{\mathcal{U}}):=\mbox{\Cross}_{\mathcal{U}}(\mathcal{T})$ is in~$\ty^\SCon(\der)$, we have to find $\Pi,\Sigma\subseteq\lang$ such that $\Pi_{\mathcal{U}}=\coD{\der}{\Sigma}(\Pi)$ and $\Sigma_{\mathcal{U}}=\coA{\der}{\Pi}(\Sigma)$.
The obvious candidate for playing the role of such pair~$(\Pi,\Sigma)$ is $(\Pi_{\mathcal{U}},\Sigma_{\mathcal{U}})$ itself, and we claim indeed that the ultraproduct $(\Pi_{\mathcal{U}},\Sigma_{\mathcal{U}})$ is a closed set-pair of~$\der$. %
Let $A\in\lang$ and suppose first that $\Pi_{\mathcal{U}}\der \Sigma_{\mathcal{U}}\cup\{A\}$. 
As $\der$ is finitary, there exist finite sets $\Pi^\star:=\{B_1,\ldots,B_n\}\subseteq\Pi_{\mathcal{U}}$ and $\Sigma^\star:=\{C_1,\ldots,C_m\}\subseteq\Sigma_{\mathcal{U}}$ such that $\Pi^\star\der \Sigma^\star\cup\{A\}$, for some $n,m\in\nat$. 
Set $I_{B_k}:=\{i\in I:B_k\in\Pi_i\}$, 
for $1\leq k\leq n$, 
and set $I_{C_k}:=\{i\in I:C_k\in\Sigma_i\}$, 
for $1\leq k\leq m$. 
By the definitions of~$\Pi_{\mathcal{U}}$ and~$\Sigma_{\mathcal{U}}$ it is clear that $I_{B_k}\in\mathcal{U}$, for $1\leq k\leq n$, and $I_{C_k}\in\mathcal{U}$, for $1\leq k\leq m$. 
Setting $I_D:=I_{B_1}\cap\dots\cap I_{B_n}\cap I_{C_1}\cap\dots\cap I_{C_m}$, by (U3) we may conclude that $I_D\in\mathcal{U}$. 
Let $i\in I_D$.  This amounts to assuming $\Pi^\star\subseteq X_i$ and $\Sigma^\star\subseteq Y_i$.  So, in view of $\Pi^\star\der \Sigma^\star\cup\{A\}$ we may use \ref{SCon:E} to conclude that $X_i\der Y_i\cup\{A\}$.
%
We may then use Prop.~\ref{prop:closure}[a] to conclude that $A\in X_i$.
It follows that $I_D\subseteq\{i\in I:A\in X_i\}$, which implies by (U2) that $\{i\in I:A\in X_i\}\in\mathcal{U}$, and so we see that $A\in\Pi_{\mathcal U}$.
The proof that $\{A\}\cup\Pi_{\mathcal{U}}\der \Sigma_{\mathcal{U}}$ implies $A\in \Sigma_{\mathcal{U}}$ is analogous.

Conversely, assume now that $\ty^\SCon(\der)$ is closed under ultraproducts, consider arbitrary $\Pi,\Sigma\subseteq\lang$, and suppose $\Pi^\star\not\der\Sigma^\star$ for all finite $\Pi^\star\subseteq\Pi$ and $\Sigma^\star\subseteq\Sigma$. Set $I:=\{(\Pi^\star,\Sigma^\star): \textrm{finite }\Pi^\star\subseteq\Pi,\textrm{finite }\Sigma^\star\subseteq\Sigma\}$ and set $\mathcal{W}:=\{\bullet i: i\in I\}$, 
where $\bullet i:=\{J\subseteq I:i\in J\}$. 
As a finite union of finite sets is still finite, it is straightforward to see that~$\mathcal{W}$ enjoys the finite intersection property, and thus, by the Ultrafilter Lemma, there exists an ultrafilter~$\mathcal{U}$ on~$I$ that contains~$\mathcal{W}$. 
Consider now the family $\mathcal{T}:=\{(\Pi_i,\Sigma_i)\}_{i\in I}$, where $(\Pi_i,\Sigma_i):=\coS_\der(i)$ for each $i \in I$.  Note that each such $(\Pi_i,\Sigma_i)$ is a $\der$-consistent theory-pair, by hypothesis.
By assumption, the ultraproduct $(\Pi_{\mathcal{U}},\Sigma_{\mathcal{U}}):=\mbox{\Cross}_{\mathcal{U}}(\mathcal{T})$ is in~$\ty^\SCon(\der)$, and is $\der$-consistent as a consequence of Lemma~\ref{prop:ultracons}. 
Hence, Lemma~\ref{lemma:consistency} implies that $\Pi_{\mathcal{U}}\not\der\Sigma_{\mathcal{U}}$. 
We claim that $(\Pi,\Sigma)\subseteq(\Pi_{\mathcal{U}},\Sigma_{\mathcal{U}})$. %
Indeed, if $A\in\Pi$ then $\bullet(\{A\},\varnothing)\in\mathcal{U}$, 
so $\bullet(\{A\},\varnothing)\subseteq\{i\in I:A\in\Pi_i\}$ 
and by (U2) it follows that $\{i\in I:A\in\Pi_i\}\in\mathcal{U}$, from which we conclude that $A\in \Pi_{\mathcal{U}}$; the proof that $\Sigma\subseteq\Sigma_{\mathcal{U}}$ is analogous.
Using \ref{comp:E}, we conclude then from $(\Pi,\Sigma)\subseteq(\Pi_{\mathcal{U}},\Sigma_{\mathcal{U}})$ that $\Pi\not\der\Sigma$. 
\end{proof}

\section*{What should follow?}
\label{conclusion}

David Hilbert once defended the axiomatic method as essential in providing a ``definitive presentation and complete logical assurance of the content of our knowledge'' (cf.~\cite{hilbert:zahlbegriff}).
Nowadays, axiomatically presented theories are commonplace, both in Mathematics and in Science.  
Logics, in particular, have often been presented through consequence relations induced by sets of axioms.  For that purpose, nonetheless, sets of anti-axioms could equally be used, though perhaps the force of habit has made such alternative approach much less prevalent.  While it is true that classical logic and intuitionistic logic disagree on what should count as theorems, they agree on what should count as anti-theorems (for an anti-axiomatization of classical propositional logic, see \cite{Morgan}).  As long as one delves further into non-classical territory, however, situations in which anti-theorems play a more important role than theorems start to look just as natural.
The novel notion of a theory-pair explored in the present paper is flexible enough to allow for the study of logics from either perspective, and even from both perspectives at once.
For a further natural step, from the model-theoretic viewpoint, it would seem worth pursuing more investigations in the line of~\cite{Badia:Marcos:18}, concerning the characterization ``in purely mathematical terms'' ---in the sense of~\cite{Tarski:notions}--- of classes of structures presented not only in terms of collections of equations or of equality-free positive literals but also classes of structures whose presentations include negative literals, which happen to be accommodated equally well within our present approach to consequence.

From the proof-theoretic viewpoint, it is usual to consider sets of sentences endowed with an algebraic structure and to demand the underlying notion of consequence to be `substitution-invariant', that is, to assume that $\Pi\der\Sigma$ implies $\sigma(\Pi)\der\sigma(\Sigma)$ for any endomorphism~$\sigma$ on~$\lang$.
Semantically, in order to properly cope with substitution-invariance, one often considers richer interpretation structures such as `logical matrices', as in~\cite{CzelakowskiJ} and~\cite{Zygmunt}, rather than bivaluations. 
Such an enrichment would not really bring added complexity to the hereby proposed notion of space of theories for $\SCon$-consequence relations. 
It is worth pointing out, at any rate, that the consequence operators $\coD{\derSCon}{\Sigma}$ or $\coA{\derSCon}{\Pi}$ are not necessarily substitution-invariant even when~$\derSCon$ is, in view of the fixed background assertions or denials.
Deductively, the richer structure of $\SCon$-consequence relations is no longer compatible with the modest design that is characteristic of Hilbert-style calculi. Several alternatives are at hand, though, including of course Gentzen-like sequent calculi, but a simpler adequate possibility is just to consider 
a generalization of Hilbert-style
calculi whose rules contain sets of premises and sets of conclusions, as in~\cite{ShoesmithSmiley} and~\cite{smar:ccal:GenHilbert}.

As we have pointed out above, exactly like the space of theories associated to a given $\TCon$-consequence relation, the space of theory-pairs associated to a given $\SCon$-consequence relation also forms a complete lattice --- as expected, meets are given by (component-wise) intersections of theory-pairs, and joins correspond to the closure of (component-wise) unions of theory-pairs.  
We believe it would be interesting to investigate the distributivity of these lattices of theories, namely by looking for sufficient conditions such as the existence of appropriate disjunction connectives in the case of $\TCon$-consequence (cf.~\cite{MartinPollard}).  We expect such a study to further enlighten the dissimilarities between $\SCon$-consequence and $\TCon$-consequence, well patent in the fact that the lattice of all $\SCon$-consequence relations is always distributive, whereas that is not the case for the lattice of all $\TCon$-consequence relations (cf.~\S 1.17 of~\cite{Humberstone:Connectives}).
In the same spirit, many other aspects of the spaces of theory-pairs would seem worth investigating.  We have for instance shown that finitary $\SCon$-consequence relations correspond precisely to those relations whose spaces of theory-pairs are closed under ultraproducts, generalizing the well-known result for finitary $\TCon$-consequence relations; given that the latter are also known to correspond to those relations whose spaces of theories are `inductive', i.e.\ those that contain the union of upward directed families (cf.~\S 1.3.3~\& 1.3.5 of~\cite{Wojcicki:TLC}), it would of course be only natural to look for a generalization of such inductiveness condition in the context of $\SCon$-consequence.

Finally, it is worth emphasizing that the present investigation has all been done  from the perspective of the smoothest possible generalization of the received (Tarskian-inspired) notion of consequence.  As soon as one entertains the possibility of a notion of compatibility that allows either for gappy or for glutty reasoning, the corresponding notion of theory and the associated space of theories will have to be suitably adapted.  A general framework allowing for non-Tarskian notions of consequence that are characterized by non-canonical valuations referring to more than two logical values is set up in~\cite{BMW}.  Extending our present foray into the land of theories towards covering many-dimensional notions of entailment looks like a natural plan of attack for the future.

\end{document}